\begin{document}

\title[Gromov-Hausdorff stability of inertial manifolds]
{Gromov-Hausdorff stability of \\ inertial manifolds
under perturbations \\ of the domain and equation}

\author[J. Lee and N. Nguyen]{Jihoon Lee and Ngocthach Nguyen$^{*}$}

\address{Jihoon Lee\hfill\break
Department of Mathematics, Chungnam National University,\hfill\break
Daejeon 34134, Korea}
\email{jjhlee24@gmail.com}

\address{Ngocthach Nguyen\hfill\break
Department of Mathematics, Chungnam National University,\hfill\break
Daejeon 34134, Korea}
\email{ngngocthach91@gmail.com}

\thanks{$^{*}$ Corresponding author}
\subjclass[2010]{35B35, 35B40, 35B42}
\keywords{Gromov-Haudorff stability, inertial manifold, reaction diffusion equation, perturbation of domain and equation}

\begin{abstract} 
In this paper, we study the Gromov-Hausdorff stability and continuous dependence of the inertial manifolds under perturbations of the domain and equation. More precisely, we use the Gromov-Hausdorff distances between two inertial manifolds and two dynamical systems to consider the continuous dependence of the inertial manifolds and the stability of the dynamical systems on inertial manifolds induced by the reaction diffusion equations under perturbations of the domain and equation.
\end{abstract}


\maketitle

\numberwithin{equation}{section}
\newtheorem{theorem}{Theorem}[section]
\newtheorem{remark}{Remark}[section]
\newtheorem{definition}{Definition}[section]
\newtheorem{lemma}[theorem]{Lemma}
\newtheorem{corollary}[theorem]{Corollary}
\newtheorem{proposition}[theorem]{Proposition}
\newtheorem{example}[theorem]{Example}

\renewcommand{\footskip}{.5in}

\section{Introduction}

Let $\Omega_0$ be an open bounded domain in $\mathbb R^N$ with smooth boundary. We consider the following reaction diffusion equation
\begin{equation} \label{eq:f0}
\begin{cases}
\partial_t u-\Delta u= f_0(u)    &\mbox{ in }\Omega_0 \times (0,\infty),\\
u=0                   &\mbox{ on }\partial\Omega_0 \times (0,\infty),
\end{cases}
\end{equation}
where $f_0: \mathbb{R}\to \mathbb{R}$ is a $C^1$ function such that $f_0$ and $f_0'$ are bounded, and $f_0$ satisfies the dissipative condition, i.e.,
\begin{align*}
\limsup_{|s| \rightarrow \infty} \dfrac{f_0(s)}{s} <0.
\end{align*}
It is well known in \cite{ACR} that the problem \eqref{eq:f0} is well-posed in various function spaces.
Let $F_0: L^2(\Omega_0) \rightarrow L^2(\Omega_0)$ be the Nemytskii operator of $f_0$. It is clear that $F_0$ is Lipschitz since $f'_0$ is bounded, and we may assume Lip$F_0 > 1$.

Let ${\rm Diff}(\Omega_0)$ be the space of diffeomorphisms $h$ from $\Omega_0$ onto its image $\Omega_h:=h(\Omega_0) \subset \mathbb R^N$ with the $C^1$ topology. Let $\mathcal{F}$ be the collection of $C^1$ functions $f_h : \mathbb{R} \rightarrow \mathbb{R}$ $(h \in{\rm Diff(\Omega_0)})$ with the dissipative condition such that $\overline{d}_{C^1} (f_h,f_0) \leq d_{C^1}(h,id)$, where the metric $\overline{d}_{C^1}$ on $\mathcal{F}$ is given by 
$$
\overline{d}_{C^1} (f_h, f_{\tilde{h}}) := \min\{d_{C^1} (f_h, f_{\tilde{h}}),1\}~\mbox{for}~h, \tilde h \in {\rm Diff}(\Omega_0),
$$
where $id$ denotes the identity map on $\Omega_0$.
For each $h \in {\rm Diff} (\Omega_0)$, we consider a perturbation of equation \eqref{eq:f0}
\begin{equation} \label{eq:fh}
\begin{cases}
\partial_t u-\Delta u= f_h(u)    &\mbox{ in }\Omega_h \times (0,\infty),\\
u=0                   &\mbox{ on }\partial\Omega_h \times (0,\infty).
\end{cases}
\end{equation}
As the above, we know that the problem \eqref{eq:fh} is well-posed, and the Nemytskii operator $F_h: L^2(\Omega_h) \rightarrow L^2(\Omega_h)$ of $f_h$ is Lipschitz.

For any $h \in {\rm Diff} (\Omega_0)$ $C^1$-close to $id$, we consider the following equation
\begin{equation}\label{eq:F}
u_t + A_h u = F_h(u),~u \in L^2 (\Omega_h),
\end{equation}
where $A_h$ denotes the operator $-\Delta$ on $\Omega_h$ with Dirichlet boundary condition.
For simplicity, we write $A_{id} =A_0$ and $F_{id} = F_{0}$. We know that $A_h$ has a family of eigenvalues $\{\lambda_i^h\}_{i=1}^{\infty}$ such that 
$$
0 < \lambda_1^h \le  \lambda_2^h \le \cdots \to \infty,
$$
and a family of corresponding eigenfunctions $\{\phi_i^h\}_{i = 1}^{\infty}$ which is an  orthonormal basis in $L^2(\Omega_h)$ and orthogonal in $H_0^1(\Omega_h)$. We denote the semi-dynamical system $S_h(t)$ induced by equation \eqref{eq:F} by
$$
S_h(t) : L^2(\Omega_h) \to L^2(\Omega_h),~S_h(t)(u_0) = u_h(t),~\forall t \ge 0,
$$
where $u_h(t)$ is the unique solution of \eqref{eq:F} with $u_h(0) = u_0$.

For any $h \in {\rm Diff}(\Omega_0)$ and $m\in \mathbb N$, let $P_m^h$ be the projection of $L^2(\Omega_h)$ onto ${\rm span}\{\phi_1^h, \ldots, \phi_m^h\}$, and $Q_m^h$ be the orthogonal complement of $P_m^h$. For simplicity, we will write $P_m^{id} := P_m^0$ and $Q_m^{id} := Q_m^0$.

\begin{definition}
We say that $\mathcal M \subset L^2(\Omega_0)$ is an $m$-dimensional inertial manifold of the semi-dynamical system $S(t)$ induced by \eqref{eq:f0} if it is the graph of a Lipschitz map $\Phi : P_m^0L^2(\Omega_0) \to Q_m^0L^2(\Omega_0)$ such that
\begin{itemize}
\item[(i)] $\mathcal M$ is invariant, i.e., $S(t)\mathcal M = \mathcal M$ for $t \in \mathbb R$,
\item[(ii)] $\mathcal M$ attracts all  trajectories of $S(t)$ exponentially, i.e., there are $C>0$ and $k>0$ such that for any $u_0 \in L^2(\Omega_0)$, there is $v_0 \in \mathcal M$ satisfying
$$
\|S(t)u_0 - S(t)v_0 \|_{L^2(\Omega_0)} \le Ce^{-kt}\|u_0 - v_0\|_{L^2(\Omega_0)},~\forall t>0.
$$
\end{itemize}
\end{definition}

In this paper, we are interested in studying the behavior of the inertial manifolds (which belong to disjoint phase spaces) of equation \eqref{eq:F} with respect to perturbations of the domain $\Omega_0$. More precisely, we use the Gromov-Hausdorff distances between two inertial manifolds  and two dynamical systems to consider the continuous dependence of the inertial manifolds and the stability of the dynamical systems on inertial manifolds induced by the reaction diffusion equations under perturbations of the domain and equation.
For this, we first prove the existence of inertial manifold of equation \eqref{eq:F} when $h$ is $C^1$-close enough to $id$.

\begin{theorem} \label{existence of IM}
Let the above assumptions on the operator $A_h$ and the nonlinearity $F_h$ hold and, in addition, let the following spectral gap condition hold:
\begin{equation}\label{spectral}
\lambda_{m+1}^0 - \lambda_{m}^0 >  2 \sqrt{2} L_0 ~\mbox{for~some}~m \in \mathbb N,
\end{equation}
where $L_0$ is a Lipschitz constant of the nonlinearity $F_0$, $\lambda_n^0$ is the $n$th eigenvalue of $A_0$ for $n \in \mathbb N$. Then there exists $\delta >0$ such that if $d_{C^1} (h,id)< \delta$, then equation \eqref{eq:F} admits the $m$-dimensional inertial manifold $\mathcal{M}_h$.
\end{theorem}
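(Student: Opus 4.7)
The plan is to reduce the problem to a classical inertial manifold existence theorem for abstract parabolic equations $u_t + Au = F(u)$ with a spectral gap of the form $\lambda_{m+1} - \lambda_m > 2\sqrt{2}\,\mathrm{Lip}(F)$ (e.g., the Foias--Sell--Temam / graph-transform construction). Since the unperturbed equation \eqref{eq:f0} satisfies this by assumption \eqref{spectral}, it suffices to verify that the spectral gap condition is \emph{preserved} for the perturbed pair $(A_h, F_h)$ once $d_{C^1}(h,id)$ is small enough. With that in hand, the classical Lyapunov--Perron construction applied on $L^2(\Omega_h)$ directly produces an $m$-dimensional inertial manifold $\mathcal{M}_h$ as the graph of a Lipschitz map $\Phi_h : P_m^h L^2(\Omega_h) \to Q_m^h L^2(\Omega_h)$.

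The first step is to control the Lipschitz constant $L_h$ of $F_h$. By the definition of the class $\mathcal{F}$, $\overline{d}_{C^1}(f_h, f_0) \le d_{C^1}(h, id) < \delta$, so $\|f_h'\|_\infty \le \|f_0'\|_\infty + \delta$. Since $F_h$ is the Nemytskii operator of $f_h$ acting on $L^2(\Omega_h)$, this yields $L_h \le L_0 + \delta$, and in particular $L_h \to L_0$ as $\delta \to 0$.

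The second step is to transfer the spectral information from $A_0$ to $A_h$. I would use the diffeomorphism $h:\Omega_0 \to \Omega_h$ as a change of variables: writing $v := u\circ h$ turns the Dirichlet eigenvalue problem for $A_h$ on $\Omega_h$ into an elliptic eigenvalue problem on the fixed domain $\Omega_0$ whose coefficients depend on $Dh$ and $\det Dh$, and which converges in the quadratic-form sense to $-\Delta$ on $\Omega_0$ as $d_{C^1}(h,id)\to 0$. The min--max characterization of Dirichlet eigenvalues then gives $\lambda_i^h \to \lambda_i^0$ for each fixed $i$, and in particular $\lambda_{m+1}^h - \lambda_m^h \to \lambda_{m+1}^0 - \lambda_m^0$. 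Combining this with Step 1 and the strict inequality \eqref{spectral}, one can choose $\delta > 0$ small enough so that $\lambda_{m+1}^h - \lambda_m^h > 2\sqrt{2}\, L_h$ whenever $d_{C^1}(h, id) < \delta$.

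Once the spectral gap is verified uniformly on $\{h : d_{C^1}(h,id) < \delta\}$, the final step is to invoke the classical inertial manifold theorem for \eqref{eq:F} on the Hilbert space $L^2(\Omega_h)$ with the spectral decomposition $P_m^h \oplus Q_m^h$, which produces $\mathcal{M}_h$. The main obstacle is the eigenvalue-continuity step, since the underlying Hilbert spaces $L^2(\Omega_h)$ themselves change with $h$, so $\lambda_i^h \to \lambda_i^0$ has to be understood through the pull-back to $\Omega_0$ rather than as convergence in a fixed space; once the change of variables is carried out, however, the argument reduces to a comparison of two elliptic quadratic forms on $\Omega_0$ and is classical (cf.\ \cite{ACR} and Henry's treatment of boundary perturbations).
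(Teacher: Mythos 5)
Your overall strategy matches the paper's: show that the spectral gap condition persists for $(A_h, F_h)$ once $d_{C^1}(h,id)$ is small, control $L_h\to L_0$ via the definition of $\mathcal{F}$, and then invoke the classical Lyapunov--Perron (Romanov) inertial-manifold existence theorem on $L^2(\Omega_h)$ with the decomposition $P_m^h\oplus Q_m^h$. The only genuine difference is how eigenvalue continuity $\lambda_i^h\to\lambda_i^0$ is established. You propose pulling the Dirichlet eigenvalue problem on $\Omega_h$ back to the fixed domain $\Omega_0$ via $v=u\circ h$, obtaining an elliptic quadratic form with $Dh$-dependent coefficients, and then comparing forms and applying min--max on the fixed Hilbert space $L^2(\Omega_0)$. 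The paper instead works with the varying domains directly: its Proposition~\ref{continuity of spectra} builds trial functions by extending eigenfunctions of $A_0$ to $\mathbb{R}^N$ and restricting them to $\Omega_{h}$, bounds $\lambda_r^{h}$ from above by min--max on $\Omega_h$, and then uses Sobolev-extension compactness together with a Cantor diagonal extraction to pass to the limit. Both routes are correct, and for the eigenvalues alone your pull-back argument is arguably cleaner since everything happens on one fixed space. The paper's heavier machinery buys something you don't mention, though: Proposition~\ref{continuity of spectra} also yields $L^2(\mathbb{R}^N)$-convergence of the eigenfunctions $\phi_i^{h}\to\xi_i^0$, which is used essentially in the proofs of Theorems~\ref{GH convergence} and~\ref{stability of flows} (e.g.\ via Lemmas~\ref{alpha} and~\ref{compare psi}). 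For the present statement, where only the eigenvalues and the Lipschitz constant matter, your argument is sufficient.
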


\begin{remark} \label{inertial_hist}
To the best of our knowledge, the existence of an inertial manifolds for \eqref{eq:f0} was first proved by Foias {et al.} \cite{FST, MS} with the non-optimal constant $C$ in the right hand side of assumption \eqref{spectral}. Moreover, Romanov \cite{Ro} proved the existence of inertial manifold of \eqref{eq:f0} under the spectral gap condition \eqref{spectral} using the Lyapunov-Perron method in \cite{SY}. For a detailed exposition of the classical theory of inertial manifolds, refer the paper by Zelik \cite{Z}.
\end{remark}

To study how the asymptotic dynamics of evolutionary equation \eqref{eq:F} changes when we vary the domain $\Omega_h$, our first task is to find a way to compare the inertial manifolds of the equations in different domains. One of the difficulties in this direction is that the phase space $L^2(\Omega_0)$ of the induced semi-dynamical system changes as we change the domain $\Omega_0$. In fact, the phase spaces $L^2(\Omega_0)$ and $L^2(\Omega_h)$ which contain inertial manifolds $\mathcal M_0$ and $\mathcal M_h$, respectively, can be disjoint even if $\Omega_h$ is a small perturbation of $\Omega_0$.

In this direction, Arrieta and Santamaria \cite{AS} estimated the distance of inertial manifolds $\mathcal{M}_\varepsilon$ of the following evolution problem
\begin{equation} \label{Fepsilon}
u_t + A_\varepsilon u = F_\varepsilon (u),~ \forall \varepsilon \in [0,\varepsilon_0]
\end{equation}
on the Hilbert spaces $X_\varepsilon$. For this purpose, they first assumed that the operator $A_0$ has the following spectral gap condition
$$
\lambda_{m+1}^0 - \lambda_{m}^0 \ge 18L_0 ~\mbox{and}~ \lambda_m^0 \ge 18L_0~\mbox{for~some}~ m \in \mathbb N
$$
to use the Lyapynov-Perron method for the existence of inertial manifold (see Proposition 2.1 in \cite{AS}). They also assumed that the nonlinear terms $F_{\varepsilon}$ have a uniformly bounded support, i.e., there exists $R>0$ such that
$$
{\rm supp} F_{\varepsilon} \subset D_R=\{u \in X_{\varepsilon}: \|u\|_{X_{\varepsilon}} \le R\},~ \forall \varepsilon \in [0,\varepsilon_0].
$$
This assumption implies that every inertial manifold $\mathcal M_{\varepsilon}$ of \eqref{Fepsilon} does not perturb outside the ball $D_R$ even though $\varepsilon$ varies. In fact, we have
$$
\mathcal M_{\varepsilon} \cap (X_{\varepsilon}\setminus D_R) = P_m^{\varepsilon}(X_{\varepsilon})\cap (X_{\varepsilon}\setminus D_R), ~\forall \varepsilon \in [0, \varepsilon_0].
$$
Note that the inertial manifold $\mathcal M_{\varepsilon}$ (or $\mathcal M_0$) of \eqref{Fepsilon} is expressed by the graph of a Lipschitz map $\Phi_{\varepsilon}$ (or $\Phi_0$).  Under the above assumptions, they proved 
$$
\|\Phi_\varepsilon - E_\varepsilon \Phi_0 \|_{L^\infty (\mathbb{R}^m, X_\varepsilon)} \rightarrow 0 \hbox{ as } \varepsilon \rightarrow 0,
$$ 
where $E_\varepsilon$ is an isomorphism from $X_0$ to $X_\varepsilon$ (for more details, see Theorem 2.3 in \cite{AS}).
Note that the norms $\| \cdot\|_{L^\infty (\mathbb{R}^m, X_\varepsilon)}$ and $\| \cdot\|_{L^\infty (\mathbb{R}^m, X_{\varepsilon'})}$ can not be comparable in general if $\varepsilon \neq \varepsilon'$. 
For any $\varepsilon \in [0, \varepsilon_0]$, we take $h_{\varepsilon} \in {\rm Diff}(\Omega_0)$ satisfying $d_{C^1}(h_{\varepsilon}, id) = \varepsilon$. Then the perturbed phase space $X_{\varepsilon}$ in \cite{AS} can be considered as the space $L^2(\Omega_{h_{\varepsilon}})$.

In this paper, we do not assume that the nonlinear terms $F_h$ $(h \in {\rm Diff}(\Omega_0))$  have a uniformly bounded support.

Recently, Lee \textit{et al.} \cite{L, LNT} introduced the Gromov-Hausdorff distance between two dynamical systems on compact metric spaces to analyze how
the asymptotic dynamics of the global attractors of \eqref{eq:f0} changes when we vary the domain $\Omega_0$.

To compare the asymptotic behavior of the dynamics on inertial manifolds, we first need to introduce the notion of Gromov-Hausdorff distance between two dynamical systems on noncompact metric spaces.
Let $(X,d_X)$ and $(Y,d_Y)$ be two metric spaces.
For any $\varepsilon >0$ and a subset $B$ of $X$, we recall that a map $i: X \rightarrow Y$ is an $\varepsilon$-isometry on $B$ if $|d_Y(i(x),i(y)) - d_X(x,y)| < \varepsilon$ for all $x,y \in B$.
In the case $B=X$, we say that $i: X \rightarrow Y$ is an $\varepsilon$-isometry.
The Gromov-Hausdorff distance $d_{GH}(X,Y)$ between $X$ and $Y$ is defined by the infimum of $\varepsilon >0$ such that there are $\varepsilon$-isometries $i: X \rightarrow Y$ and $j: Y \rightarrow X$ such that $U_\varepsilon (i(X)) =Y$ and $U_\varepsilon (j(Y))=X$, where $U_\varepsilon (B)$ is the $\varepsilon$ neighborhood of $B$.
Let $\mathcal{X} =\{X_h : h \in {\rm Diff}(\Omega_0) \}$ be the collection of metric spaces.

\begin{definition}
We say that {$X_h \in \mathcal X$ converges to $X_k$ in the Gromov-Hausdorff sense} as $h \rightarrow k$
if for any $\varepsilon >0$ and a bounded set $B_k \subset X_k$,
there is $\delta >0$ such that if $d_{C^1}(h,k)<\delta$
then there is a bounded set $B_h \subset X_h$ satisfying $d_{GH}(B_h,B_k) < \varepsilon$.
\end{definition}

We observe that $X_h$ converges to $X_k$ in the Gromov-Hausdorff sense if $d_{GH}(X_h,X_k) \rightarrow 0$ as $h \rightarrow k$. However the converse is not true in general. Let $S$ be a dynamical system on $X$, i.e., $S:X\times \mathbb R \to X$. For any subset $B$  of $X$, we denote $S|_{B}$ by the restriction of $S$ to $B\times \mathbb R$.

\begin{definition}
Let $S_1$ and $S_2$ be dynamical systems on metric spaces $X$ and $Y$, respectively.
For any bounded sets $B_1 \subset X$ and $B_2 \subset Y$, the Gromov-Hausdorff distance $D_{GH}^T(S_1|_{B_1}, S_2|_{B_2})$ between $S_1|_{B_1}$ and $S_2|_{B_2}$  with respect to $T>0$ is defined by the infimum of $\varepsilon>0$ such that there are maps $i : X \rightarrow Y$ and $j: Y \rightarrow X$, and $\alpha \in Rep_{B_1} (\varepsilon)$ and $\beta \in Rep_{B_2}(\varepsilon)$ with the following properties:
\begin{itemize}
\item[(i)] $i$ and $j$ are $\varepsilon$-isometries on $B_1$ and $B_2$, respectively, satisfying
$$
U_{\varepsilon}(i(B_1)) \cap B_2 = B_2 ~\mbox{and} ~U_{\varepsilon}(j(B_2)) \cap B_1 = B_1,
$$

\item[(ii)] $d_Y (i (S_1(x,\alpha(x,t))) , S_2(i(x),t) ) < \varepsilon$ for $x \in B_1$ and $t \in [-T,T]$, and 
\\ $d_X (j (S_2(y,\beta(y,t))) , S_1(j(y),t) ) < \varepsilon$ for $y \in B_2$ and $t \in [-T,T]$,
\end{itemize}
where $Rep_B(\varepsilon)$ is the collection of continuous maps $\alpha : B \times \mathbb{R} \rightarrow \mathbb{R}$ such that for given $x \in B$, $\alpha (x, .)$ is a homeomorphism on $\mathbb{R}$ with $\left| \dfrac{\alpha(x,t)}{t} - 1 \right| < \varepsilon$ for $t \neq 0$.
\end{definition}

\begin{definition}
Let $\mathcal{DS}= \{ (X_h, S_h): h \in {\rm Diff}(\Omega_0)\}$
be a collection of dynamical systems on metric spaces $X_h$.
We say that a dynamical system $S_k \in \mathcal{DS}$ is 
Gromov-Hausdorff stable if for any  $\varepsilon >0$, $T>0$ and a bounded set $B_k \subset X_k$,
there exists $\delta >0$ such that if $d_{C^1}(h,k)< \delta$ then
there is a bounded set $B_h \subset X_h$ satisfying
$D_{GH}^T(S_h|_{B_h}, S_k|_{B_k}) < \varepsilon$.
\end{definition}

We observe that the Gromov-Hausdorff stability of dynamical systems on the global attractors under perturbations of the domain was studied in \cite{L, LNT}.

Throughout the paper, we assume the following conditions
\begin{equation}\label{condition}
\lambda_{m+1}^0 - \lambda_{m}^0 > 2 \sqrt{2} L_0 ~\mbox{and}~ \lambda_m^0 > L_0~\mbox{for~some}~ m \in \mathbb N.
\end{equation}
Moreover we assume that $m$ is the smallest number satisfying \eqref{condition}, and the inertial manifold $\mathcal M_h$ for equation \eqref{eq:F} means the unique $m$-dimensional inertial manifold for \eqref{eq:F}.
With all the notations in mind, we state the main results of this paper.

\begin{theorem} \label{GH convergence}
The inertial manifold $\mathcal{M}_h$ of equation \eqref{eq:F} converges to $\mathcal{M}_0$ in the Gromov-Hausdorff sense as $h \rightarrow id$.
\end{theorem}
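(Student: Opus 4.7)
The plan is to transport one inertial manifold into the phase space of the other through a natural unitary change of basis, compare the resulting graph functions by a continuous-dependence argument for the Lyapunov--Perron scheme used to produce them, and then assemble the required $\varepsilon$-isometries on bounded subsets. First I would define the linear isometry $E_h : L^2(\Omega_0) \to L^2(\Omega_h)$ on the eigenbases by $E_h \phi_i^0 := \phi_i^h$ for every $i \in \mathbb{N}$, extended by linearity and continuity. Since both $\{\phi_i^0\}$ and $\{\phi_i^h\}$ are orthonormal, $E_h$ is a genuine linear isometry, carrying $P_m^0 L^2(\Omega_0)$ onto $P_m^h L^2(\Omega_h)$ and $Q_m^0 L^2(\Omega_0)$ onto $Q_m^h L^2(\Omega_h)$. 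Set $\Psi_h := E_h^{-1} \circ \Phi_h \circ E_h$ on $P_m^0 L^2(\Omega_0)$; then $\Psi_h$ and $\Phi_0$ both take values in $Q_m^0 L^2(\Omega_0)$ and can be compared pointwise in a single Hilbert space.

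The next step is to show that $\Psi_h \to \Phi_0$ uniformly on every bounded subset of $P_m^0 L^2(\Omega_0)$ as $h \to id$. Each $\Phi_h$ is obtained (as in Theorem \ref{existence of IM}) by the Lyapunov--Perron scheme: $\Phi_h(p) = Q_m^h u_h(0)$, where $u_h$ is the unique bounded backward solution on $(-\infty,0]$ of $u' + A_h u = F_h(u)$ with $P_m^h u_h(0) = p$, realized as the fixed point of a contraction on a weighted space of curves whose ratio depends only on the spectral gap $\lambda_{m+1}^h - \lambda_m^h$ and $\mathrm{Lip}\, F_h$. Conjugating by $E_h$ produces an equation on $L^2(\Omega_0)$ with generator $\widetilde A_h := E_h^{-1} A_h E_h$ (diagonal in $\{\phi_i^0\}$ with eigenvalues $\lambda_i^h$) and nonlinearity $\widetilde F_h := E_h^{-1} F_h E_h$. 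Classical domain-perturbation results for the Dirichlet Laplacian yield $\lambda_i^h \to \lambda_i^0$ for every $i$, so the strict inequality \eqref{condition} persists with a uniform margin for $h$ close to $id$, keeping the contraction constant uniformly below $1$. The hypothesis $\overline{d}_{C^1}(f_h, f_0) \le d_{C^1}(h, id)$ together with continuity of the change of variables induced by $h$ gives that $\widetilde F_h \to F_0$ uniformly on bounded sets. A standard continuous-dependence estimate for fixed points of uniformly contracting operators then delivers the desired convergence of $\Psi_h$ to $\Phi_0$ on bounded subsets.

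With the comparison in hand, I construct the Gromov--Hausdorff approximants as follows. Given $\varepsilon > 0$ and bounded $B_0 \subset \mathcal{M}_0$, choose $R > 0$ so that the projection $P_m^0 B_0$ is contained in the closed ball $B_R \subset P_m^0 L^2(\Omega_0)$. Define
\[
B_h := \{ q + \Phi_h(q) : q \in E_h B_R \} \subset \mathcal{M}_h,
\]
together with $i : B_0 \to B_h$, $p + \Phi_0(p) \mapsto E_h p + \Phi_h(E_h p)$, and $j : B_h \to B_0$ defined analogously using $E_h^{-1}$. Since $E_h$ is an exact isometry and $\Phi_0, \Phi_h$ are uniformly Lipschitz, the smallness of $\|\Psi_h - \Phi_0\|_{L^\infty(B_R)}$ established in the previous step upgrades $i$ and $j$ to mutual $\varepsilon$-isometries whose images are $\varepsilon$-dense in their respective targets. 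This yields $d_{GH}(B_h, B_0) < \varepsilon$ for $h$ sufficiently close to $id$, which is exactly the Gromov--Hausdorff convergence required.

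The main obstacle will be executing the continuous-dependence step of the Lyapunov--Perron scheme without the compact-support hypothesis on the nonlinearities used in \cite{AS}: because the base spaces $P_m^h L^2(\Omega_h)$ are unbounded, global uniform convergence of $\Psi_h$ to $\Phi_0$ cannot be expected, and the fixed-point estimate must be localized to bounded sets of initial data while simultaneously controlling the difference between $\widetilde A_h$ and $A_0$ on the infinite-dimensional $Q_m^0$-component, where the behavior of the higher eigenvalues $\lambda_i^h - \lambda_i^0$ for $i > m$ feeds into the Perron kernel $e^{-\widetilde A_h(t-s)}$. Handling these two aspects in tandem is the technical heart of the argument.
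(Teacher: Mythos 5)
Your proposal diverges from the paper at the very first move, and the divergence creates a genuine gap. The paper transports data between the two phase spaces by the composition operator $j_h : u \mapsto u \circ h^{-1}$, whereas you use the spectral unitary $E_h$ defined by $E_h\phi_i^0 = \phi_i^h$. The reason the paper chooses $j_h$ is crucial: $j_h$ conjugates the Nemytskii operator $F_h$ into the Nemytskii operator on $\Omega_0$ induced by $f_h$, so the hypothesis $\overline{d}_{C^1}(f_h,f_0) \to 0$ translates directly into convergence of the nonlinear parts. The price is that $j_h^{-1}A_h j_h \neq A_0$, which is why the paper needs Proposition \ref{continuity of spectra} and the careful Lemmas \ref{alpha}--\ref{distance of IM} to compare the linear semigroups.

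Your $E_h$ flips this trade-off the wrong way. It does diagonalize the linear part nicely, but the middle step of your argument — that $\widetilde F_h := E_h^{-1} F_h E_h \to F_0$ uniformly on bounded sets — is not justified and in general looks false. By isometry, $\|\widetilde F_h u - F_0 u\|_{L^2(\Omega_0)} = \|F_h(E_h u) - E_h(F_0 u)\|_{L^2(\Omega_h)}$, and there is no mechanism by which the nonlocal spectral unitary $E_h$ intertwines with the pointwise operation $v \mapsto f_h\circ v$. You appeal to "continuity of the change of variables induced by $h$," but that is a property of the composition map $j_h$, not of $E_h$: the two maps are unrelated in general (for instance, $E_h$ depends only on the eigenfunctions, so it is unchanged if you replace $f_h$ by $f_0$, while $j_h$ knows nothing about the spectrum). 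Without control on $\widetilde F_h - F_0$, the continuous-dependence estimate for the Lyapunov--Perron fixed point collapses, since the integrand involves $F_h$ applied to the full (infinite-dimensional) backward trajectory, not merely its $P_m$ projection.

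There is a second, subsidiary problem: $E_h$ is defined on all modes, but the available spectral convergence (Proposition \ref{continuity of spectra} or the "classical domain-perturbation results" you invoke) only gives convergence of finitely many $\lambda_i^h, \phi_i^h$, and only along subsequences. This is enough for the $P_m^0$ block and for the semigroup tail on $Q_m^0$ (since the high modes decay uniformly), which is essentially the content of the paper's Lemma \ref{orthogonal_est}; but it does not give you $E_h \to \mathrm{id}$ in any operator topology, so estimates involving $E_h$ applied to generic elements of $Q_m^0 L^2(\Omega_0)$ cannot be closed. The concluding assembly of $\varepsilon$-isometries (the third paragraph) is sound once you have uniform convergence of the graph maps on bounded sets, and in spirit matches how the paper finishes, but that key convergence is exactly what your middle step fails to deliver.
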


\begin{remark}
We will continue to prove Theorem \ref{GH convergence} by applying the Lyapunov-Perron method to get the Lipschitz map $\Phi_h$ in \eqref{lyapunov}, where $h \in {\rm Diff}(\Omega_0)$. Note that the assumption $\lambda_{m+1}^0 - \lambda_{m}^0 > 2 \sqrt{2} L_0$ is a sharp condition for the construction of $\Phi_h$. In fact, if $\lambda_{m+1}^0 - \lambda_{m}^0 < 2 \sqrt{2} L_0$, then we cannot apply the Lyapunov-Perron technique for the proof of Theorem \ref{GH convergence}.
\end{remark}

Let $S_h(t)$ be the dynamical system on the inertial manifold $\mathcal M_h$ induced by equation \eqref{eq:F}.

\begin{theorem} \label{stability of flows}
The dynamical system $S_0(t)$ on the inertial manifold $\mathcal M_0$ induced by equation  \eqref{eq:f0} is Gromov-Hausdorff stable.
\end{theorem}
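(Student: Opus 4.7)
The strategy is to reduce the infinite-dimensional semi-flow on $\mathcal{M}_h$ to a genuine flow on $\mathbb{R}^m$ via the graph parametrization, and then compare the two reduced ODEs whose coefficients become arbitrarily close as $h$ tends to $id$. Since $\mathcal{M}_h$ is the graph of the Lipschitz map $\Phi_h : P_m^h L^2(\Omega_h) \to Q_m^h L^2(\Omega_h)$ and $\mathcal{M}_h$ is invariant under $S_h(t)$, the restriction $S_h(t)|_{\mathcal{M}_h}$ is conjugate, via $p \mapsto p + \Phi_h(p)$, to the flow generated by
\begin{equation*}
\dot p = -A_h p + P_m^h F_h(p + \Phi_h(p)),\qquad p \in P_m^h L^2(\Omega_h) \cong \mathbb{R}^m.
\end{equation*}
An analogous reduction holds for $S_0(t)|_{\mathcal{M}_0}$. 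Thus the Gromov--Hausdorff stability reduces to a uniform-in-time comparison of two Lipschitz vector fields on $\mathbb{R}^m$.

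Given $\varepsilon>0$, $T>0$ and a bounded $B_0\subset \mathcal{M}_0$, I would first enlarge $B_0$ to $\tilde B_0\subset \mathcal{M}_0$ with $S_0(t)B_0 \subset \tilde B_0$ for all $t\in[-T,T]$; this is possible because in the finite-dimensional coordinates the reduced vector field is globally Lipschitz, so orbits over $[-T,T]$ stay bounded. By Theorem \ref{GH convergence} I obtain, for $d_{C^1}(h,id)$ small, a bounded set $\tilde B_h\subset \mathcal{M}_h$ together with $\varepsilon_1$-isometries $i : \tilde B_0 \to \tilde B_h$ and $j : \tilde B_h \to \tilde B_0$ satisfying the required covering condition. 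I take $B_h := \bigcap_{t\in[-T,T]} S_h(-t)(\tilde B_h)\cap j^{-1}(B_0)$, which is bounded and whose forward--backward $T$-images remain in $\tilde B_h$. The isometries $i, j$ built for Theorem \ref{GH convergence} are naturally compositions of the graph maps $p\mapsto p+\Phi_h(p)$ with an isomorphism $J_h : P_m^0 L^2(\Omega_0)\to P_m^h L^2(\Omega_h)$ arising from the $C^1$-convergence of eigenvalues and eigenfunctions under domain perturbation; this is the same machinery already used to produce Theorem \ref{GH convergence}.

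The core estimate is a Gronwall comparison in the $\mathbb{R}^m$ coordinates. The two reduced vector fields
\begin{equation*}
V_h(p) = -A_h p + P_m^h F_h(p + \Phi_h(p)),\qquad V_0(p) = -A_0 p + P_m^0 F_0(p + \Phi_0(p))
\end{equation*}
differ, once transported by $J_h$, by a quantity $\eta(h)$ that tends to $0$ as $d_{C^1}(h,id)\to 0$, uniformly on any bounded subset of $\mathbb{R}^m$, because $A_h\to A_0$ on the $m$-dimensional spectral subspace, $F_h\to F_0$ in $C^1$, and $\Phi_h\to\Phi_0$ (the content of Theorem \ref{GH convergence}). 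Gronwall's inequality on $[-T,T]$ then gives
\begin{equation*}
\| i(S_0(t)u_0) - S_h(t)(i(u_0)) \|_{L^2(\Omega_h)} \le C(T,L_0)\,\eta(h),\quad u_0\in B_0,\ t\in[-T,T],
\end{equation*}
and symmetrically for $j$. Choosing $\alpha(x,t)=\beta(y,t)=t$, which trivially lies in $\mathrm{Rep}(\varepsilon)$, and shrinking $d_{C^1}(h,id)$ so that both $\varepsilon_1$ and $C(T,L_0)\eta(h)$ are below $\varepsilon$, one verifies all clauses in the definition of $D_{GH}^T(S_h|_{B_h}, S_0|_{B_0}) < \varepsilon$.

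The principal obstacle is quantitative: one must make the identification $J_h$ between $L^2(\Omega_0)$ and $L^2(\Omega_h)$ precise enough that the norm comparison survives the Gronwall step, and one must ensure that $J_h^{-1}\Phi_h J_h \to \Phi_0$ uniformly on bounded (not merely compact) subsets of the finite-dimensional base. In the absence of a uniformly bounded support for the nonlinearities $F_h$ --- the assumption made in \cite{AS} that we explicitly discard --- this uniformity on bounded sets is precisely what forces the careful choice of $\tilde B_0$ and the forward--backward saturation used to define $B_h$, and it is the step where the bounded-set formulation of Gromov--Hausdorff stability (rather than a global one) becomes essential.
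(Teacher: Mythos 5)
Your proposal is correct and follows essentially the same route as the paper: reduce the flows to the $m$-dimensional base ODEs, compare them via a Gronwall estimate (the paper's Lemma~\ref{compare p(t)}), reuse the $\varepsilon$-isometries $\hat{j}_{h_n}$, $\hat{i}_{h_n}$ from the proof of Theorem~\ref{GH convergence} together with the convergence $\beta_{h_n}\to 0$, enlarge $B_0$ to a bounded set capturing the $[-T,T]$ orbit segments, and take the trivial reparametrizations $\alpha=\beta=\mathrm{id}$. The paper phrases the argument as a contradiction and chooses $B_{h_n}$ more simply as the $\psi$-coordinate shadow of $B_0$ (so that $\hat{j}_{h_n}(B_0)=B_{h_n}$ exactly, making the covering condition automatic), but the substance of the two proofs is the same.
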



\section{Existence of inertial manifolds}

In this section, we analyze the behavior of the Laplace operators with Dirichlet boundary conditions under perturbations of the domain and equation. In particular, we prove that the spectra of $A_0$ behave continuously, and it will be applied to prove Theorems \ref{existence of IM} and \ref{GH convergence}. Note that Arrieta \emph{et al.} assumed  the continuity of the spectra of the Laplace operators with Neumann boundary conditions to prove the continuity of the global attractors (see Definition 2.5 and Theorem 4.6 in \cite{AC}).

\begin{proposition} \label{continuity of spectra}
The spectra of $A_0$ behaves continuously. More precisely, for any fixed $\ell \in \mathbb{N}$ and a sequence $\{h_n\}_{n\in \mathbb N}$ in $\rm{Diff}(\Omega_0)$ with $h_n \rightarrow id$, there exist a subsequence $\{h_k:=h_{n_k}\}_{k \in \mathbb{N}}$ of $\{h_n\}_{{n\in \mathbb N}}$ and a collection of eigenfunctions $\{\xi_1^0, \ldots, \xi_\ell^0 \}$ of $A_0$ with respect to eigenvalues $\{\lambda_1^0, \ldots, \lambda_\ell^0\}$ such that $\lambda_i^{h_{k}} \rightarrow \lambda_i^0$ and $ \phi_i^{h_{k}} \rightarrow  \xi_i^0$ in $L^2(\mathbb{R}^N)$ for all $1 \leq i \leq \ell$.

\end{proposition}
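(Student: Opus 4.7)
The plan is to use the Courant–Fischer min--max characterization of eigenvalues together with the change of variables induced by the diffeomorphism $h_n$ to transport functions between $H_0^1(\Omega_0)$ and $H_0^1(\Omega_{h_n})$. Since $h_n \to \text{id}$ in the $C^1$ topology, the Jacobians of $h_n$ and $h_n^{-1}$ converge uniformly to the identity matrix on $\Omega_0$, so for any $u \in H_0^1(\Omega_0)$ the pulled-back function $u \circ h_n^{-1} \in H_0^1(\Omega_{h_n})$ satisfies
\[
\|u \circ h_n^{-1}\|_{L^2(\Omega_{h_n})} = (1+o(1))\|u\|_{L^2(\Omega_0)}, \qquad \|\nabla(u \circ h_n^{-1})\|_{L^2(\Omega_{h_n})} = (1+o(1))\|\nabla u\|_{L^2(\Omega_0)},
\]
with an analogous estimate in the reverse direction.

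First I would prove $\lambda_i^{h_n} \to \lambda_i^0$ for each $i \leq \ell$. By the min--max principle,
\[
\lambda_i^h = \inf_{\substack{V \subset H_0^1(\Omega_h) \\ \dim V = i}} \sup_{\substack{v \in V \\ v \ne 0}} \frac{\|\nabla v\|_{L^2(\Omega_h)}^2}{\|v\|_{L^2(\Omega_h)}^2}.
\]
For the upper bound, I transport the span of $\phi_1^0, \ldots, \phi_i^0$ from $H_0^1(\Omega_0)$ into $H_0^1(\Omega_{h_n})$ via pullback by $h_n^{-1}$; the linear independence persists for $n$ large, and the displayed estimates yield $\limsup_{n \to \infty} \lambda_i^{h_n} \le \lambda_i^0$. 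For the matching lower bound, the reverse pullback of $\phi_1^{h_n}, \ldots, \phi_i^{h_n}$ yields an $i$-dimensional subspace of $H_0^1(\Omega_0)$ with comparable Rayleigh quotients, giving $\liminf_{n \to \infty} \lambda_i^{h_n} \ge \lambda_i^0$.

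Next I would extract eigenfunction limits. Extending $\phi_i^{h_n}$ by zero produces $\tilde\phi_i^{h_n} \in H^1(\mathbb{R}^N)$ with $\|\tilde\phi_i^{h_n}\|_{L^2(\mathbb{R}^N)} = 1$ and $\|\nabla \tilde\phi_i^{h_n}\|_{L^2(\mathbb{R}^N)}^2 = \lambda_i^{h_n}$, which is uniformly bounded by the previous step. Because $h_n \to \text{id}$ in $C^1$, all $\Omega_{h_n}$ are contained in a fixed bounded open set $U$ for $n$ large, so $\{\tilde\phi_i^{h_n}\}$ is bounded in $H_0^1(U)$. By Rellich--Kondrachov and a diagonal argument across $i = 1, \ldots, \ell$, I extract a single subsequence $\{h_k\}$ along which $\tilde\phi_i^{h_k} \rightharpoonup \xi_i^0$ weakly in $H^1(\mathbb{R}^N)$ and strongly in $L^2(\mathbb{R}^N)$ for every $i \le \ell$. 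The strong $L^2$ convergence preserves the orthonormality relations $\int_{\mathbb{R}^N} \xi_i^0 \xi_j^0 = \delta_{ij}$; in particular no $\xi_i^0$ is trivial.

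Finally I would identify each $\xi_i^0$ as an eigenfunction of $A_0$ with eigenvalue $\lambda_i^0$. That $\xi_i^0$ is supported in $\overline{\Omega_0}$ follows because, for any compact $K \subset \mathbb{R}^N \setminus \overline{\Omega_0}$, the $C^1$ convergence $h_n \to \text{id}$ gives $K \cap \Omega_{h_n} = \emptyset$ eventually, so $\tilde\phi_i^{h_n}$ vanishes on $K$; combined with weak $H^1$ convergence and the regularity of $\partial \Omega_0$, this yields $\xi_i^0 \in H_0^1(\Omega_0)$. Then for any $\psi \in C_c^\infty(\Omega_0)$ the support of $\psi$ lies inside $\Omega_{h_k}$ for $k$ large, so I may pass to the limit in
\[
\int_{\mathbb{R}^N} \nabla \tilde\phi_i^{h_k} \cdot \nabla \psi \, dx = \lambda_i^{h_k} \int_{\mathbb{R}^N} \tilde\phi_i^{h_k} \psi \, dx,
\]
using weak $H^1$ convergence on the left and strong $L^2$ convergence plus $\lambda_i^{h_k} \to \lambda_i^0$ on the right. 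This identifies $\xi_i^0$ as a weak, hence classical, eigenfunction of $A_0$ at eigenvalue $\lambda_i^0$.

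The main obstacle is the lower bound $\liminf \lambda_i^{h_n} \ge \lambda_i^0$ and the verification that $\xi_i^0 \in H_0^1(\Omega_0)$, both of which require care because the inclusion between $\Omega_0$ and $\Omega_{h_n}$ can fail in either direction; these rely essentially on the $C^1$ smallness of $h_n - \text{id}$ and a uniform extension/cut-off construction near $\partial\Omega_0$. A secondary subtlety is that when $\lambda_i^0$ has multiplicity greater than one, the limits $\xi_i^0$ need not equal the original $\phi_i^0$; the statement accommodates this by permitting any choice of eigenbasis, and my orthonormality argument still delivers a valid one.
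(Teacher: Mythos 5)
Your proposal is correct, but it departs from the paper's argument in two substantive ways, and in both cases your route is more direct.

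\textbf{Eigenvalue convergence.} You prove $\limsup \lambda_i^{h_n} \le \lambda_i^0$ and $\liminf \lambda_i^{h_n} \ge \lambda_i^0$ symmetrically, transporting test subspaces by composition with $h_n^{\mp 1}$; since these pullbacks land in $H_0^1$ of the respective domains and distort both the $L^2$ and gradient norms only by $1+o(1)$, you get $\lambda_i^{h_n} \to \lambda_i^0$ along the whole sequence, with no subsequence needed for the eigenvalues. The paper only establishes the one-sided estimate $\lambda_r^{h_n} \le (1+o(1))\lambda_r^0 + o(1)$ (using the extension-then-restriction map $R_{h_n}E_0$, which does not even land in $H_0^1(\Omega_{h_n})$ — hence the $o(1)$ fudge in their \eqref{eq:minmax}), passes to a subsequence where $\lambda_i^{h_k} \to \tau_i \le \lambda_i^0$, and only after showing $\{\xi_i^0\}$ is an orthonormal family of eigenfunctions does it recover $\tau_i = \lambda_i^0$ by an inductive multiplicity argument (Claim~4). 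Your two-sided min--max makes that entire Claim~4 unnecessary, which is a genuine simplification.

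\textbf{Eigenfunction extraction.} You extend $\phi_i^{h_n}$ by zero to a fixed bounded open $U$, exploit the norm identity for zero-extension of $H_0^1$ functions to get a uniform $H^1(\mathbb{R}^N)$ bound, and invoke Rellich--Kondrachov plus a diagonal over $i=1,\dots,\ell$. The paper instead works on an increasing exhaustion $K_{h_n} \subset \Omega_0$, extracts limits on each $K_{h_n}$ via a Cantor diagonal, patches them into a single $\xi_i^0$, and separately verifies $\xi_i^0 \in H^1(\Omega_0)$ via an auxiliary extension operator (Claims~1 and~2). Your version is shorter and cleaner, at the cost of needing to argue $\xi_i^0 \in H_0^1(\Omega_0)$ rather than just $H^1(\Omega_0)$; you sketch this correctly via the support argument (vanishing on compacta disjoint from $\overline{\Omega_0}$ plus smoothness of $\partial\Omega_0$), and indeed the $H_0^1$ membership is what you need to identify $\xi_i^0$ as a Dirichlet eigenfunction, whereas the paper only verifies $H^1$ membership before Claim~3 and leaves the boundary condition implicit. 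The remaining steps — orthonormality via strong $L^2$ convergence, testing the weak formulation against $C_c^\infty(\Omega_0)$, and the remark about multiplicity — match the paper's Claims~3--4 in spirit.

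In short: same landscape, but you trade the paper's one-sided min--max plus multiplicity induction for a symmetric two-sided min--max, and the exhaustion-and-patch for a zero-extension plus Rellich. Both are sound; yours is tighter.
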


\begin{proof}

Let $\ell \in \mathbb{N}$ be fixed, and take a sequence $\{h_n\}_{n \in \mathbb{N}}$ in $\rm{Diff}(\Omega_0)$ with $h_n \rightarrow id$ as $n \rightarrow \infty$.
Let $E_0: H^1(\Omega_0) \rightarrow H^1 (\mathbb{R}^N)$ be an extension operator, and $R_{h_n}: H^1(\mathbb{R}^N) \rightarrow H^1(\Omega_{h_n})$ be the restriction operator.
For each $1 \leq i \leq \ell$ and $n \in \mathbb{N}$, we consider a map $\xi_i^{h_n}: \Omega_{h_n} \to \mathbb R$ by 
$$
\xi_i^{h_n}(x) = R_{h_n} (E_0 \phi_i^0)(x), ~\forall x \in \Omega_{h_n}.
$$
By the min-max characterization of the eigenvalues, we have
\begin{align}\label{eq:minmax}
\lambda_r^{h_n} \leq \max \{\|\nabla \xi\|_{L^2(\Omega_{h_n})}^2 + o(1): \xi \in {\rm span} \{\xi_1^{h_n}, \ldots, \xi_r^{h_n}\} ~ \hbox{with} ~ \|\xi \|_{L^2(\Omega_{h_n})}=1\}, ~\forall  1 \leq r \leq \ell,
\end{align}
where $t = o(1)$ means that $t \to 0$ as $h_n \to id$. Take a function $\xi = \sum_{i=1}^r a_i \xi_i^{h_n}$ in ${\rm span} \{\xi_1^{h_n}, \ldots, \xi_r^{h_n}\}$ such that the right hand side of \eqref{eq:minmax} has the maximum at $\xi$ with $\| \xi\|_{L^2(\Omega_{h_n})}=1$. Let $\phi = \sum_{i=1}^r a_i \phi_i^0$. Then we see that
\begin{align*}
\|\nabla \xi\|_{L^2(\Omega_{h_n} \cap \Omega_0)}^2 &= \|\nabla \phi \|_{L^2(\Omega_{h_n} \cap \Omega_0)}^2 \le \|\nabla \phi \|_{L^2(\Omega_0)}^2 \le \lambda_r^0 \|\phi \|_{L^2(\Omega_0)}^2 \\
&= \lambda_r^0 \|\phi \|_{L^2(\Omega_0 \cap \Omega_{h_n})}^2 +\lambda_r^0 \|\phi \|_{L^2(\Omega_0 \setminus \Omega_{h_n})}^2 \le \lambda_r^0+  \lambda_r^0~o(1).
\end{align*}
In the last inequality, we have used the fact that 
$$
\|\phi \|_{L^2(\Omega_0 \cap \Omega_{h_n})}= \|\xi \|_{L^2(\Omega_0 \cap \Omega_{h_n})} \leq \|\xi \|_{L^2(\Omega_{h_n})} =1~\mbox{and}~ \|\phi \|_{L^2(\Omega_0 \setminus {\Omega}_{h_n})} = o(1).
$$
Similarly we get $\|\nabla \xi\|_{L^2(\Omega_{h_n} \setminus {\Omega}_0)} = o(1)$. Since
\begin{align*}
\|\nabla \xi\|_{L^2(\Omega_{h_n})}^2 
&= \|\nabla \xi\|_{L^2(\Omega_{h_n} \cap \Omega_0)}^2+ \|\nabla \xi\|_{L^2(\Omega_{h_n} \setminus \Omega_0)}^2,
\end{align*}
we have 
$$
\lambda_r^{h_n} \leq (1+o(1)) \lambda_r^0 + o(1).
$$
Consequently, for each $1 \leq i \leq \ell$, we can take $0<\tau_i \leq \lambda_i^0$ and a subsequence $\{h_k:=h_{n_k}\}_{k \in \mathbb{N}}$ of $\{h_n\}_{n \in \mathbb N}$ such that $\lambda_i^{h_k} \rightarrow \tau_i$ as $k \rightarrow \infty$.
Take a sequence  $\{\varepsilon_n >0 \}_{n \in \mathbb{N}}$ in $\mathbb R$ such that 
$$
\varepsilon_n \rightarrow 0~\mbox{and}~ K_{h_n} := \{ x \in \Omega_0 \mid d(x,\partial \Omega_0) \geq \varepsilon_n \}\subset \Omega_{h_n}, ~\forall n \in \mathbb{N}.
$$

We will complete the proof by demonstrating the following four claims.

{Claim 1.}
We show that for any $1 \leq i \leq \ell$, $\|\phi_i^{h_k} \|_{L^2(\Omega_{h_k} \setminus K_{h_k})} \rightarrow 0$ as $k \rightarrow \infty$.

Let $V$ be an open set in $\mathbb{R}^N$ such that $\Omega_{h_k} \subset V$ for all sufficiently large $k$. By the Sobolev extension theorem, we can take an operator $T_{h_k}: H^1_0 (\Omega_{h_k}) \rightarrow H^1 (\mathbb{R}^N)$ such that
$$
T_{h_k} u = u ~\mbox{on}~ \Omega_{h_k},~ {\rm supp} ~ T_{h_k} u \subset V, ~\mbox{and}~\|T_{h_k} u \|_{H^1 (\mathbb{R}^N)} \leq D \|u \|_{H^1_0 (\Omega_{h_k})}, ~\forall u \in H^1_0 (\Omega_{h_k}),
$$
where $D$ is a constant which is independent on $k$.

Since $\|T_{h_k} \phi_i^{h_k} \|_{H^1(\mathbb{R}^N)}$ is uniformly  bounded on $k$, there are $\phi_0 \in L^2(\mathbb{R}^N)$ and a subsequence of $\{ T_{h_k} \phi_i^{h_k} \}_{k \in \mathbb N}$, still denoted by $\{ T_{h_k} \phi_i^{h_k} \}_{k \in \mathbb N}$, such that $T_{h_k} \phi_i^{h_k} \rightarrow \phi_0$ in $L^2(\mathbb{R}^N)$ as $k \rightarrow \infty$. We have
$$
\|\phi_i^{h_k} \|_{L^2(\Omega_{h_k} \setminus K_{h_k})} \leq \| T_{h_k} \phi_i^{h_k} - \phi_0\|_{L^2(V)} + \| \phi_0\|_{L^2(\Omega_{h_k} \setminus K_{h_k})}
$$

\noindent
Since $|\Omega_h \setminus K_h| \rightarrow 0$ as $h \rightarrow id$, we derive that $\|\phi_i^{h_k} \|_{L^2(\Omega_{h_k} \setminus K_{h_k})} \rightarrow 0$ as $k \rightarrow \infty$. This completes the proof of Claim 1.

For any $1 \leq i \leq \ell$, we will consider the limit of the sequence $\{\phi_i^{h_k}\}_{k \in \mathbb{N}}$ in Claim 1. By the induction process, for each $n \in \mathbb{N}$, there exist $\xi_i^{0,n} \in L^2(K_{h_n})$ and a subsequence $\{\phi_i^{h_{k,n}}\}_{k \in \mathbb{N}}$ of  $\{\phi_i^{h_{k,n-1}}\}_{k \in \mathbb{N}}$ which converges to $\xi_i^{0,n}$ strongly in $L^2(K_{h_n})$ (and weakly in $H^1(K_{h_n})$) as $k \rightarrow \infty$, where $\phi_i^{h_{k,0}}: = \phi_i^{h_k}$ for all $i = 1, \ldots, \ell$ and $k \in \mathbb N$.
By the Cantor diagonal argument, we assume that there is a subsequence $ \{\phi_i^{h_k}:= \phi_i^{h_{k,k}}\}_{k \in \mathbb{N}}$ of  $ \{\phi_i^{h_{k,n}}\}_{k,n \in \mathbb{N}}$ such that $\{\phi_i^{h_k}\}_{k\in \mathbb N}$ converges to $\xi_i^{0,n}$ strongly in $L^2(K_{h_n})$ (and weakly in $H^1(K_{h_n})$) as $k \rightarrow \infty$.
Since
$$
\|\xi_i^{0,n} - \xi_i^{0,n+1}\|_{L^2(K_{h_n})}
\leq \| \xi_i^{0,n} - \phi_i^{h_k}\|_{L^2(K_{h_n})} + \|\phi_i^{h_k} - \xi_i^{0,n+1} \|_{L^2(K_{h_{n+1}})} \to 0 ~\mbox{as}~k \to \infty,
$$
we see that $\xi_i^{0,n} = \xi_i^{0,n+1}$ almost everywhere on $K_{h_n}$.
Define a map $\xi_i^0 : \Omega_0 \rightarrow \mathbb{R}$ by $\xi_i^0 (x) = \xi_i^{0,n}(x)$, where $n$ is the natural number satisfying $x \in K_{h_n} \setminus K_{h_{n-1}}$.

Now we show that $\xi_i^0 \in H^1(\Omega_0)$. We first consider an extension operator $E_{h_n}: H^1(K_{h_n}) \to H^1(\mathbb R^N)$. Then $\{ E_{h_n} \xi_i^{0,n} \}_{n \in \mathbb{N}}$  is a sequence in $H^1(\Omega_0)$. Since 
\begin{align*}
\|E_{h_n} \xi_i^{0,n} \|_{H^1(\Omega_0)}
&\leq D \|\xi_i^{0,n} \|_{H^1(K_{h_n})}
\leq D \lim_{k \rightarrow \infty} \|\phi_i^{h_k} \|_{H^1(K_{h_n})} \\
&\leq D  \lim_{k \rightarrow \infty} \big(1+ (\lambda_i^{h_k})^{1/2}\big) \|\phi_i^{h_k} \|_{L^2(\Omega_{h_k})} \leq D\big(1+ (\lambda_n^0)^{1/2}\big),~\forall n \in \mathbb{N},
\end{align*}
where $D$ is a positive constant independent of $h_k$, we see that $\{ E_{h_n} \xi_i^{0,n} \}_{n \in \mathbb{N}}$ is bounded in $H^1(\Omega_0)$.
Hence there are $\tilde{\xi}_i^0 \in H^1(\Omega_0)$ and
a subsequence of $\{ E_{h_n} \xi_i^{0,n} \}_{n \in \mathbb{N}}$, still denoted by $\{ E_{h_n} \xi_i^{0,n} \}_{n \in \mathbb{N}}$, which converges to $\tilde{\xi}_i^0$ in $L^2(\Omega_0)$.
Moreover, for each $K_{h_n}$, we have
$$
\|\xi_i^0 - \tilde{\xi}_i^0 \|_{L^2(K_{h_n})} \le \|\xi_i^{0} - \xi_i^{0,n}\|_{L^2(K_{h_n})} + \|\xi_i^{0,n} - \tilde{\xi}_i^{0}\|_{L^2(K_{h_n})} \to 0 ~\mbox{as}~ n \to \infty,
$$
and so $\xi_i^0 = \tilde{\xi}_i^0$ almost everywhere in $K_{h_n}$ for all $n \in \mathbb{N}$.
Since $\bigcup_{n \in \mathbb{N}} K_{h_n} = \Omega_0$, we see that $\xi_i^0 = \tilde{\xi}_i^0 \in H^1(\Omega_0)$.

{Claim 2.}
 $\phi_i^{h_k} \rightarrow \xi_i^0$ in $L^2(\mathbb{R}^N)$ as $k \rightarrow \infty$.

By the construction of $\xi_i^0$ and Claim 1, we see that for any $\varepsilon >0$, there is $k_0 \in \mathbb{N}$ such that 
$$
\|\phi_i^{h_k} \|_{L^2(K_{h_k} \setminus K_{h_{k_0}})} \le \varepsilon/4,~
\|\xi_i^0 \|_{L^2(\Omega_0 \setminus K_{h_{k_0}})} \leq \varepsilon /4,
$$
$$
\|\phi_i^{h_k} \|_{L^2(\Omega_{h_k} \setminus K_{h_k})} \leq \varepsilon/4~\mbox{and}~ \|\phi_i^{h_k} - \xi_i^0 \|_{L^2(K_{h_{k_0}})} < \varepsilon, ~\forall k \geq k_0.
$$
Since
\begin{align*}
\|\phi_i^{h_k} - \xi_i^0 \|_{L^2(\mathbb{R}^N)}^2
= \|\phi_i^{h_k} - \xi_i^0 \|_{L^2(K_{k_0})}^2 
+\|\phi_i^{h_k} - \xi_i^0 \|_{L^2(\mathbb{R}^N \setminus K_{k_0})}^2
\end{align*}
and
\begin{align*}
\|\phi_i^{h_k} - \xi_i^0\|_{L^2(\mathbb{R}^N \setminus K_{k_0})}
\leq \|\phi_i^{h_k} \|_{L^2(\Omega_{h_k} \setminus K_{h_k})}
+ \|\phi_i^{h_k} \|_{L^2(K_{h_k} \setminus K_{h_{k_0}})}
+ \|\xi_i^0 \|_{L^2(\Omega_0 \setminus K_{h_{k_0}})} < \varepsilon,
\end{align*}
we have that $\|\phi_{h_k} - \xi_0 \|_{L^2(\mathbb{R}^N)}^2 \leq 2\varepsilon^2$. Since $\varepsilon$ is arbitrary, we get $\|\phi_{h_k} - \xi_0 \|_{L^2(\mathbb{R}^N)}^2 \rightarrow 0$ as $k \rightarrow \infty$.

{Claim 3.} $\lambda_i^{h_k} \rightarrow \tau_i$ as $k \rightarrow \infty$.

For any $k_0 \in \mathbb N$ and $k \ge k_0$ and $\xi \in C_c^1(K_{h_{k_0}})$, we first note that
\begin{align*}
\left|\int_{\Omega_{h_k}} \nabla \phi_i^{h_k} \nabla \xi - \int_{\Omega_0} \nabla \xi_i^0 \nabla \xi \right|
&\leq \left|\int_{K_{h_{k_0}}} (\nabla \phi_i^{h_k} - \nabla \xi_i^0) \nabla \xi \right| \\
&\;\;+ \int_{\Omega_{h_k}\setminus K_{h_{k_0}} } \left| \nabla \phi_i^{h_k} \right|  \left|\nabla \xi \right|
+ \int_{\Omega_0 \setminus K_{h_{k_0}}} \left|\nabla \xi_i^0 \right|  \left|\nabla \xi \right|.
\end{align*}
Since $\phi_i^{h_k} \rightarrow \xi_i^0$ weakly in $H^1(K_{h_{k_0}})$ and $\nabla \xi = 0$ outside $K_{h_{k_0}}$, we get

$$
\left|\int_{\Omega_{h_k}} \nabla \phi_i^{h_k} \nabla \xi - \int_{\Omega_0} \nabla \xi_i^0 \nabla \xi \right| \rightarrow 0
\hbox{ as } h_k \rightarrow id.
$$

On the other hand, we have

$$
\int_{\Omega_{h_k}} \nabla \phi_i^{h_k} \nabla \xi = \int_{\Omega_{h_k}}\lambda_i^{h_k} \phi_i^{h_k} \xi \rightarrow \int_{\Omega_{0}}\tau_i \xi_i^0 \xi \hbox{ as } h_k \rightarrow id.
$$
Since $\bigcup_{k \in \mathbb N} C_c^1(K_{h_k})$ is dense in $H_0^1(\Omega_0)$, we get $-\Delta \xi_i^0 = \tau_i \xi_i^0$.
Therefore $\{\tau_i\}_{i \in \mathbb N}$ and $\{\xi_i^0\}_{i \in \mathbb N}$ are eigenvalues and the corresponding eigenfunctions of $A_0$, respectively, and so we have
$$
\lambda_i^{h_k} \rightarrow \tau_i~\mbox{and}~ \phi_i^{h_k} \rightarrow \xi_i^0~\mbox{as}~k \rightarrow \infty.
$$
This completes the proof of Claim 3.

{Claim 4.} $\tau_i = \lambda_i^0$ for all $1 \leq i \leq \ell$, and $\{\xi_i^0\}_{i=1}^\ell$ is orthonormal in $L^2(\Omega_0)$.

For $1 \leq i, j \leq \ell$, by Claim 2, we get

\begin{align*}
&\left| \int_{\Omega_{h_k}} \phi_i^{h_k} \phi_j^{h_k} - \int_{\Omega_0} \xi_i^0 \xi_j^0  \right|
= \left| \int_{\mathbb{R}^N} \phi_i^{h_k} \phi_j^{h_k} - \xi_i^0 \xi_j^0   \right| \\
&\hspace{20mm}\leq \int_{\mathbb{R}^N} |\phi_i^{h_k}| |\phi_j^{h_k} - \xi_j^0|
+ \int_{\mathbb{R}^N} |\phi_i^{h_k} - \xi_i^0| |\xi_j^0| \\
&\hspace{20mm}\leq \|\phi_i^{h_k}\|_{L^2(\Omega_{h_k}) }  \|\phi_j^{h_k} - \xi_j^0\|_{L^2(\mathbb{R}^N)}   + \|\phi_i^{h_k} - \xi_i^0 \|_{L^2(\mathbb{R}^N)} \|\xi_j^0 \|_{L^2(\Omega_0)} \rightarrow 0~\mbox{as}~ k \rightarrow \infty.
\end{align*}
Consequently, we have
$$
(\phi_i^{h_k}, \phi_j^{h_k})_{L^2(\Omega_{h_k})} \rightarrow (\xi_i^0, \xi_j^0)_{L^2(\Omega_0)}~\mbox{as}~k \rightarrow \infty,
$$
and so $\{\xi_i^0\}_{i=1}^{\ell}$ is orthonormal in $L^2(\Omega_0)$.

Let $a$ be the multiplicity of $\lambda_1^0$.
Since $\lambda_1^0$ is the smallest eigenvalue and $\tau_j \leq \lambda_j^0$, we have $\tau_j = \lambda_1^0$ for all $j\leq a$.
Let $\xi = \sum_{i=1}^\infty p_i \phi_i^0$ be a function satisfying $A_0 \xi = \lambda_1^0 \xi$. Since
$$
A_0\xi=\sum_{i=1}^\infty \lambda_i^0 p_i \phi_i^0 \quad \mbox{and} \quad \lambda_1^0\xi=\sum_{i=1}^\infty \lambda_1^0 p_i \phi_i^0,
$$
we have $p_i =0$ for all $i>a$. This means that $\xi \in {\rm span} \{\phi_1^0, \ldots, \phi_a^0\}$ and so
$$
{\rm span} \{\phi_1^0, \ldots, \phi_a^0\} = {\rm span} \{\xi_1^0 \ldots, \xi_a^0\}.
$$
Suppose that $\tau_{a+1} \neq \lambda_{a+1}^0$. Since $A_0 \xi_{a+1}^0 = \lambda_1^0 \xi_{a+1}^0$, we have 
$$
\xi_{a+1}^0 \in {\rm span} \{\phi_1^0, \ldots, \phi_a^0\} = {\rm span} \{\xi_1^0 \ldots, \xi_a^0\} \quad \mbox{and} \quad \tau_{a+1} = \lambda_1^0.
$$
This contradicts to the orthonormality of $\{\xi_i^0 \}_{i=1}^{a+1}$, and so we get $\tau_{a+1} = \lambda_{a+1}^0$.

Continuing this process, we derive that $\tau_i = \lambda_i^0$ for all $1 \leq i \leq \ell$, and so completes the proof of the proposition.
\end{proof}

For any $h\in {\rm Diff}(\Omega_0)$, we let 
\begin{align}
L_h = \sup_{s \in \mathbb R}| f'_h(s)| \quad \mbox{and} \quad
L_0 = \sup_{s \in \mathbb R}| f'_0(s)|. \label{Lipscons}
\end{align}
It is clear that $L_h$ and $L_0$ are Lipschitz constants of the nonlinear terms $F_h$ and $F_0$, respectively, such that $L_h \rightarrow L_0$ as $h \rightarrow id$.

Define a map $j_{h} :L^2(\Omega_0) \rightarrow L^2(\Omega_{h})$ by 
$$
j_{h}(u) := u \circ {h}^{-1}, ~\forall u \in L^2(\Omega_0) .
$$
Then we see that $j_h$  is an isomorphism, and $\|j_{h} \| \rightarrow 1$ as $h \to id$.
Here $\|j_h \|=\|j_h \|_{L^\infty(L^2(\Omega_0),L^2(\Omega_h))}$.
Hence we may assume that $\|j_{h} \| < 2$ for all $h  \in{\rm Diff}(\Omega_0)$.

Now we prove the existence of inertial manifold of \eqref{eq:F} under perturbations of the domain and equation.

\begin{proof}[\bf Proof of Theorem \ref{existence of IM}]
Let $\eta = ({\lambda_{m+1}^0 - \lambda_m^0 - 2 \sqrt{2} L_0})/{2}$.
We first show that there is $\delta >0$ such that if $d_{C^1} (h, id) < \delta$, then
$$
\left| \lambda_m^0 - \lambda_m^h \right| < \eta/2 ~\mbox{and}~ \left| \lambda_{m+1}^0 - \lambda_{m+1}^h \right| < \eta/2.
$$
Suppose not. Then for any $n \in \mathbb{N}$, there is $h_n \in {\rm Diff}(\Omega_0)$ with $d_{C^1} (h_n, id) \leq 1/n$ such that
$$
\left| \lambda_m^0 - \lambda_m^{h_n} \right| \geq \eta/2~\mbox{or}~\left| \lambda_{m+1}^0 - \lambda_{m+1}^{h_n} \right| \geq \eta/2.
$$
Without loss of generality, we may assume that $\left| \lambda_m^0 - \lambda_m^{h_n} \right| \geq \eta/2$ for all $n \in \mathbb{N}$.

On the other hand, by Proposition \ref{continuity of spectra}, there is a subsequence $\{h_{n_k}\}_{k \in \mathbb{N}}$ of $\{h_n\}_{n\in \mathbb N}$ such that $\lambda_m^{h_{n_k}} \rightarrow \lambda_m^0$ as $k \rightarrow \infty$.
The contradiction shows that there is $\delta >0$ such that if $d_{C^1}(h,id) < \delta$, then 
$$
\lambda_{m+1}^h - \lambda_m^h \geq  (\lambda_{m+1}^0 - \lambda_{m}^0) -|\lambda_{m+1}^0 - \lambda_{m+1}^h| - |\lambda_m^h - \lambda_m^0| > 2 \sqrt{2} L_0 + \eta.
$$
Since $L_h \rightarrow L_0$ as $h \rightarrow id$, we see that $\lambda_{m+1}^h - \lambda_m^h  > 2 \sqrt{2} L_h$ if $h$ is sufficiently $C^1$-close to $id$. Hence equation \eqref{eq:F} admits the $m$ dimensional inertial manifold $\mathcal{M}_h$ in $L^2(\Omega_h)$ which can be presented by the graph of a Lipschitz map $\Psi_h$.
\end{proof}

\section{Proof of Theorem \ref{GH convergence}}
Suppose not. Then there are $\varepsilon >0$ and a bounded set $B_0 \subset \mathcal{M}_0$ such that for any $n \in \mathbb{N}$, 
there is $h_n \in {\rm Diff}(\Omega_0)$ with $d_{C^1}(h_n,id)< 1/n$
such that for any bounded set $B_{h_n} \subset \mathcal{M}_{h_n}$,
$d_{GH}(B_{h_n},B_0) \geq \varepsilon$.
For each $n \in \mathbb{N}$, let $\{\lambda_1^{h_n}, \ldots, \lambda_m^{h_n} \}$ and $\{\phi_1^{h_n}, \ldots, \phi_m^{h_n}\}$ be the first $m$ eigenvalues and corresponding  $m$ eigenfunctions of $A_{h_n}$, respectively.
By Proposition \ref{continuity of spectra}, there are $m$ eigenfunctions, denoted by $\{\phi_1^0, \ldots, \phi_m^0\}$, with respect to the first $m$ eigenvalues $\{\lambda_1^0, \ldots, \lambda_m^0\}$ of $A_0$, and a subsequence of $\{h_n\}_{n \in \mathbb N}$, still denoted by $\{h_n\}_{n \in \mathbb N}$, such that
$\lambda_i^{h_n} \rightarrow \lambda_i^0$ and
$\phi_i^{h_n} \rightarrow \phi_i^0$ in $L^2(\mathbb{R}^N)$ as $n \rightarrow \infty$
for all $1 \leq i \leq m$.
We assume that $|\lambda_i^{h_n} - \lambda_i^0|<1$ for all $n \in \mathbb{N}$ and $1 \leq i \leq m$.
For any $h \in {\rm Diff}(\Omega_0)$, define a map $\psi_h: P_m^{h}L^2(\Omega_h) \to \mathbb R^m$ by
$$
\psi_h\left(\sum_{i=1}^ma_i\phi_i^h\right) = (a_1.\ldots, a_m),\quad a_i \in \mathbb R.
$$
For each $n \in \mathbb{N}$, we denote by $B_{h_n}$ the collection of $u_{h_n} \in \mathcal{M}_{h_n}$ such that $\psi_{h_n} P_m^{h_n}u_{h_n} = \psi_0 P_m^0 u_0$ for some $u_0 \in B_0$.

We will complete the proof of Theorem \ref{GH convergence} by showing that $d_{GH}(B_{h_n}, B_0) < \varepsilon$ for all sufficiently large $n \in \mathbb{N}$. For this, we need several lemmas.

\begin{lemma} \label{alpha}
For any fixed $1 \leq i \leq m$, we have
$$
\|j_{h_n} \phi_i^0 - \phi_i^{h_n} \|_{L^2(\Omega_{h_n})} \rightarrow 0 \hbox{ as } n \rightarrow \infty.
$$
\end{lemma}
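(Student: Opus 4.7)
The plan is to split the norm $\|j_{h_n}\phi_i^0 - \phi_i^{h_n}\|_{L^2(\Omega_{h_n})}$ over the two pieces $\Omega_{h_n}\cap \Omega_0$ and $\Omega_{h_n}\setminus \Omega_0$, and on each piece insert a suitable comparison function via the triangle inequality so that both sides can be controlled by ingredients already contained in Proposition \ref{continuity of spectra}.

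On $\Omega_{h_n}\cap\Omega_0$ I would write
$$\|j_{h_n}\phi_i^0 - \phi_i^{h_n}\|_{L^2(\Omega_{h_n}\cap \Omega_0)} \leq \|j_{h_n}\phi_i^0 - \phi_i^0\|_{L^2(\Omega_{h_n}\cap \Omega_0)} + \|\phi_i^0 - \phi_i^{h_n}\|_{L^2(\Omega_{h_n}\cap \Omega_0)}.$$
The second term vanishes by Claim 2 in the proof of Proposition \ref{continuity of spectra}, which gives $\phi_i^{h_n}\to \phi_i^0$ in $L^2(\mathbb R^N)$ (after zero-extension). For the first term, elliptic regularity for the Dirichlet Laplacian on the smooth bounded domain $\Omega_0$ ensures $\phi_i^0\in C^\infty(\overline{\Omega_0})$, so in particular $\phi_i^0$ is uniformly continuous on $\overline{\Omega_0}$; combined with the fact that $d_{C^1}(h_n,id)\to 0$ forces $h_n^{-1}\to id$ uniformly, this yields $\sup_{x\in \Omega_{h_n}\cap\Omega_0}|\phi_i^0(h_n^{-1}(x)) - \phi_i^0(x)| \to 0$, and the bounded Lebesgue measure of $\Omega_0$ then finishes the $L^2$ estimate.

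On $\Omega_{h_n}\setminus \Omega_0$ the triangle inequality gives
$$\|j_{h_n}\phi_i^0 - \phi_i^{h_n}\|_{L^2(\Omega_{h_n}\setminus \Omega_0)} \leq \|j_{h_n}\phi_i^0\|_{L^2(\Omega_{h_n}\setminus \Omega_0)} + \|\phi_i^{h_n}\|_{L^2(\Omega_{h_n}\setminus \Omega_0)}.$$
The second summand goes to zero by Claim 1 in the proof of Proposition \ref{continuity of spectra}: for the sequence $\varepsilon_n\downarrow 0$ fixed there, $\Omega_{h_n}\setminus\Omega_0 \subset \Omega_{h_n}\setminus K_{h_n}$, so the desired bound is a restriction of the one in Claim 1. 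The first summand is handled by a change of variables $y=h_n^{-1}(x)$; since the Jacobian of $h_n$ is uniformly bounded by $C^1$-closeness to the identity, this term is dominated by $\|\phi_i^0\|_{L^\infty(\Omega_0)}\cdot C\,|\Omega_{h_n}\setminus\Omega_0|^{1/2}$, which tends to $0$ because $|\Omega_{h_n}\setminus\Omega_0|\to 0$ as $h_n\to id$.

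The only delicate point is the pointwise control of $j_{h_n}\phi_i^0$, which requires the uniform continuity and $L^\infty$-boundedness of $\phi_i^0$; this is essentially free thanks to elliptic regularity on the smooth domain. A subtler matter, already addressed in the ambient argument of Theorem \ref{GH convergence}, is that when $\lambda_i^0$ has multiplicity larger than one, the functions $\phi_i^0$ produced by Proposition \ref{continuity of spectra} need not coincide with an arbitrary independently chosen eigenbasis; Lemma \ref{alpha} must therefore be read with the specific basis $\{\phi_i^0\}$ selected by that proposition, which is exactly how it is invoked in the proof of Theorem \ref{GH convergence}.
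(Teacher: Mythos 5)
Your proof is correct, but it takes a genuinely different route from the paper's. The paper reduces the statement to $\|j_{h_n}\phi_i^0 - \phi_i^0\|_{L^2(V)}\to 0$ on a fixed neighborhood $V\supset\Omega_0\cup\Omega_{h_n}$, splits into three pieces over $\Omega_0\cap\Omega_{h_n}$, $\Omega_{h_n}\setminus\overline\Omega_0$, and $\Omega_0\setminus\overline\Omega_{h_n}$, and works only at the $H^1$ level: the intersection term is handled by approximating $E\phi_i^0$ by $C^1$ functions $\chi_k$ and estimating $\|j_{h_n}\chi_k-\chi_k\|$ via an integral form of the mean value theorem, while the term on $\Omega_{h_n}\setminus\overline\Omega_0$ is controlled by a compactness/contradiction argument extracting a strong $L^2$-limit from a bounded $H^1$ family. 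Your proof instead invokes elliptic regularity to get $\phi_i^0\in C^\infty(\overline{\Omega_0})$, hence uniform continuity and an $L^\infty$ bound, which lets you handle both the intersection term (uniform continuity plus $h_n^{-1}\to id$ uniformly) and the boundary-strip term (direct $L^\infty\cdot|\Omega_{h_n}\setminus\Omega_0|^{1/2}$ estimate) without approximating sequences or compactness. Your route is shorter and more elementary granted the smoothness of $\partial\Omega_0$, whereas the paper's argument is robust to lower-regularity settings where eigenfunctions would not be smooth up to the boundary. Your use of Claim 1 via the inclusion $\Omega_{h_n}\setminus\Omega_0\subset\Omega_{h_n}\setminus K_{h_n}$ is valid, your appeal to Claim 2 for the term $\|\phi_i^0-\phi_i^{h_n}\|$ is exactly what the paper does, and your closing remark about the meaning of $\{\phi_i^0\}$ in the case of multiple eigenvalues is the right caveat.
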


\begin{proof}
For any fixed $1 \leq i \leq m$, we have
\begin{align*}
\|j_{h_n} \phi_i^0 - \phi_i^{h_n} \|_{L^2(\Omega_{h_n})}
\leq \|j_{h_n} \phi_i^0 - \phi_i^0 \|_{L^2(\mathbb{R}^N)} + \|\phi_i^0 - \phi_i^{h_n} \|_{L^2(\mathbb{R}^N)}
\end{align*}
By Proposition \ref{continuity of spectra}, we have that 
$$
\|\phi_i^0 - \phi_i^{h_n} \|_{L^2(\mathbb{R}^N)} \to 0~\mbox{as}~ n \rightarrow \infty.
$$
So it is sufficient to prove that $\|j_{h_n} \phi_i^0 - \phi_i^0 \|_{L^2(\mathbb{R}^N)} \rightarrow 0$ as $n \rightarrow \infty$.

Take a neighborhood $V$ of $\Omega_0$ such that $\Omega_{h_n} \subset V$ for all $n \in \mathbb{N}$.
Then we have
\begin{align*}
\|j_{h_n} \phi_i^0 - \phi_i^0 \|_{L^2(V)}
&\leq \|j_{h_n} \phi_i^0 - \phi_i^0 \|_{L^2(\Omega_0 \cap \Omega_{h_n})}
+ \|j_{h_n} \phi_i^0 \|_{L^2(\Omega_{h_n} \setminus \ \overline{\Omega}_0)}
+ \|\phi_i^0 \|_{L^2(\Omega_{0} \setminus \overline{\Omega}_{h_n})}\\
&:= I_n + II_n + III_n.
\end{align*}
Let $E : H^1(\Omega_0) \rightarrow H^1(\mathbb{R}^N)$ be an extension operator such that
$$
E u = u~\mbox{on}~ \Omega_0, ~{\rm supp} ~ E u \subset V, ~\mbox{and}~ \|E u \|_{H^1(\mathbb{R}^N)} \le C \|u \|_{H^1(\Omega_0)}, ~\forall u \in H^1(\Omega_0).
$$
Since $C^1(V)$ is dense in $H^1(V)$, we can take a sequence $\chi_k$ in $C^1(V)$ such that $\chi_k \rightarrow E\phi_i^0$ in $H^1(V)$ as $k \rightarrow \infty$. Note that

\begin{align}
\|j_{h_n} \phi_i^0 - \phi_i^0 \|_{L^2(\Omega_0 \cap \Omega_{h_n})}
\leq & ~ \|j_{h_n} \phi_i^0 - j_{h_n} \chi_k \|_{L^2(\Omega_0 \cap \Omega_{h_n})}
+ \|j_{h_n} \chi_k - \chi_k \|_{L^2(\Omega_0 \cap \Omega_{h_n})} \nonumber \\
&+ \|\chi_k - \phi_i^0 \|_{L^2(\Omega_0 \cap \Omega_{h_n})}. \label{j_hn}
\end{align}
Since $E \phi_i^0 = \phi_i^0$ on $\Omega_0$, the first and last terms in the right hand side of \eqref{j_hn} tend to $0$ as $k \rightarrow \infty$.
For the second term, we get

\begin{align*}
\int_{\Omega_0 \cap \Omega_{h_n}} |\chi_k (h_{n}^{-1} x) - \chi_k (x)|^2 dx
&\leq d_{C^1}(h_n^{-1},id) \int_0^1 \int_{\Omega_0 \cap \Omega_{h_n}} |D\chi_m ((1-t)x + th_n^{-1}(x))|^2 dx dt\\
&\leq 2 d_{C^1} ({h_n^{-1}},id) \int_V |D \chi_k (x)|^2 dx.
\end{align*}
Since
\begin{align*}
\int_{V} |D \chi_k (x)|^2 dx \rightarrow \int_V |D E \phi_i^0 (x)|^2 dx \leq C \int_{\Omega_0} |D\phi_i^0(x)|^2dx,
\end{align*}
we see that $\int_V |D \chi_k (x)|^2 dx$ is uniformly bounded on $k$.
Hence we obtain
$$
\|j_{h_n} \chi_k - \chi_k \|_{L^2(\Omega_0 \cap \Omega_{h_n})} \rightarrow 0 \hbox{ as } n \rightarrow \infty.
$$
This implies that $I_n \rightarrow 0$ as $n \rightarrow \infty$.

Next, we suppose that $II_n$ does not converge to $0$ as $n \rightarrow \infty$. Then there are $\delta >0$ and a subsequence of $\{h_n\}_{n\in \mathbb N}$, still denoted by $\{h_n\}_{n\in \mathbb N}$, such that $II_{n} \geq \delta$ for all $n \in \mathbb{N}$.
Since $\| j_{h_n} \phi_i^0 \|_{H^1 (V)}$ is uniformly bounded on $n$, there exists a subsequence of $\{ j_{h_n} \phi_i^0 \}_{n \in \mathbb{N}}$, still denoted by $\{j_{h_n} \phi_i^0\}_{n \in \mathbb{N}}$, and $u_0 \in H^1(V)$ such that $j_{h_n} \phi_i^0 \rightarrow u_0$ in $L^2(V)$. Then we have
\begin{align*}
\|j_{h_n} \phi_i^0 \|_{L^2(\Omega_{h_n} \setminus {\Omega}_0)}
\leq  \|j_{h_n} \phi_i^0 - u_0 \|_{L^2(\Omega_{h_n} \setminus {\Omega}_0)} + \|u_0 \|_{L^2(\Omega_{h_n} \setminus {\Omega}_0)}.
\end{align*}
By the fact $|\Omega_{h_n} \setminus {\Omega}_0| \rightarrow 0$, we get $\|j_{h_n} \phi_i^0 \|_{L^2(\Omega_{h_n} \setminus {\Omega}_0)} \rightarrow 0$ as $n \rightarrow \infty$.
The contradiction shows that $II_n \rightarrow 0$ as $n \rightarrow \infty$.

Since $|\Omega_{0} \setminus {\Omega}_{h_n}| \rightarrow 0$ as $n \rightarrow \infty$, we see that $III_{n} \rightarrow 0$.
Consequently, we have $\|j_{h_n} \phi_i^0 - \phi_i^0 \|_{L^2(V)} \rightarrow 0$ as $n \rightarrow \infty$, and so completes the proof.

\end{proof}

Let $\Psi_h:P_m^{h}L^2(\Omega_h) \to Q_m^{h}L^2(\Omega_h)$ be the Lipschitz map whose graph is the inertial manifold $\mathcal M_h$ in Theorem \ref{existence of IM}. We may assume  Lip$\Psi_h \le 1$ (see the proof of Theorem 1 in \cite{Ro}). If we let $\Phi_h = \Psi_h \circ \psi_h^{-1}$, then $\mathcal M_h$ can be considered as the graph of $\Phi_h$ with Lip$\Phi_h \le 1$.

With the notations, we have the following lemma.

\begin{lemma} \label{compare psi}
For any $p_0 \in P_m^0 L^2(\Omega_0)$ and $p_{n} \in P_m^{h_n}L^2(\Omega_{h_n})$,
$$
\left|\psi_{h_n} p_{n} - \psi_0 p_0 \right|_{\mathbb{R}^m}
\leq  \alpha(h_n) \sum_{i=1}^m |a_i| + \|j_{h_n} p_0 -p_{n} \|_{L^2(\Omega_{h_n})},
$$
where $\alpha(h_n) = \sup \{\|j_{h_n} \phi_i^0 - \phi_i^{h_n} \|_{L^2(\Omega_{h_n})} : i =1, \ldots, m\}$, and  $p_0 = \sum_{i=1}^m a_i \phi_i^0$.
\end{lemma}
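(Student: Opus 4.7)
The plan is to unpack both sides using the definitions, write $p_0 = \sum_{i=1}^m a_i \phi_i^0$ and $p_n = \sum_{i=1}^m b_i \phi_i^{h_n}$ for some coefficients $b_1, \ldots, b_m \in \mathbb R$, and then exploit the fact that $\{\phi_i^{h_n}\}_{i=1}^m$ is orthonormal in $L^2(\Omega_{h_n})$ so that the Euclidean norm on $\mathbb R^m$ coincides with the $L^2(\Omega_{h_n})$-norm of the corresponding combination of $\phi_i^{h_n}$'s. Specifically, $|\psi_{h_n} p_n - \psi_0 p_0|_{\mathbb R^m} = |(b_1-a_1,\ldots,b_m-a_m)|_{\mathbb R^m} = \|\sum_{i=1}^m (a_i-b_i)\phi_i^{h_n}\|_{L^2(\Omega_{h_n})}$.

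Next I would apply $j_{h_n}$ (which is linear) to $p_0$ to get $j_{h_n} p_0 = \sum_{i=1}^m a_i\, j_{h_n}\phi_i^0$, and then form the algebraic identity
\begin{equation*}
\sum_{i=1}^m (a_i - b_i) \phi_i^{h_n} \;=\; (j_{h_n} p_0 - p_n) \;-\; \sum_{i=1}^m a_i \bigl(j_{h_n}\phi_i^0 - \phi_i^{h_n}\bigr).
\end{equation*}
Taking the $L^2(\Omega_{h_n})$-norm and applying the triangle inequality yields
\begin{equation*}
\Bigl\|\sum_{i=1}^m (a_i - b_i) \phi_i^{h_n}\Bigr\|_{L^2(\Omega_{h_n})} \;\leq\; \|j_{h_n} p_0 - p_n\|_{L^2(\Omega_{h_n})} + \sum_{i=1}^m |a_i|\,\|j_{h_n}\phi_i^0 - \phi_i^{h_n}\|_{L^2(\Omega_{h_n})}.
\end{equation*}
Bounding each factor $\|j_{h_n}\phi_i^0 - \phi_i^{h_n}\|_{L^2(\Omega_{h_n})}$ by the supremum $\alpha(h_n)$ and combining with the orthonormality identification from the first step gives exactly the stated inequality.

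There is no substantial obstacle here: the argument is a two-line manipulation whose only ingredients are the linearity of $j_{h_n}$, the triangle inequality, and the orthonormality of the family $\{\phi_i^{h_n}\}_{i=1}^m$ in $L^2(\Omega_{h_n})$ (which identifies the Euclidean norm on $\mathbb R^m$ with the $L^2(\Omega_{h_n})$-norm on $P_m^{h_n} L^2(\Omega_{h_n})$). The only point worth keeping in mind is the distinction between $\sum_i |a_i|$ on the right-hand side (arising from the crude triangle inequality over the $m$ perturbation terms) and the Euclidean $\ell^2$-norm on the left-hand side; no Cauchy-Schwarz refinement is needed since the statement tolerates the coarser $\ell^1$-bound.
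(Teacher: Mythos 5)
Your proof is correct and is essentially identical to the paper's: both identify $|\psi_{h_n}p_n - \psi_0 p_0|_{\mathbb R^m}$ with $\|\sum_i(a_i-b_i)\phi_i^{h_n}\|_{L^2(\Omega_{h_n})}$ via orthonormality, insert $j_{h_n}p_0 = \sum_i a_i j_{h_n}\phi_i^0$, and split by the triangle inequality into the $\alpha(h_n)\sum_i|a_i|$ term and the $\|j_{h_n}p_0 - p_n\|$ term.
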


\begin{proof}
For any $p_0 \in P_m^0 L^2(\Omega_0)$ and $p_{n} \in P_m^{h_n}L^2(\Omega_{h_n})$, there are $a_i, b_i \in \mathbb R$ such that $p_0 = \sum_{i=1}^m a_i \phi_i^0$ and $ p_{n} = \sum_{i=1}^m b_i \phi_i^{h_n}$.
Then we have
\begin{align*}
\left|\psi_{h_n} p_{n} - \psi_0 p_0 \right|_{\mathbb{R}^m}
&= \left\| \sum_{i=1}^m (a_i-b_i) \phi_i^{h_n} \right\|_{L^2(\Omega_{h_n})}\\
&\leq \left\| \sum_{i=1}^m a_i (\phi_i^{h_n} - j_{h_n} \phi_i^0) \right\|_{L^2(\Omega_{h_n})}
+ \left\| j_{h_n} \sum_{i=1}^m a_i  \phi_i^0 - \sum_{i=1}^m b_i \phi_i^{h_n} \right\|_{L^2(\Omega_{h_n})}  \\
&\leq \alpha(h_n) \sum_{i=1}^m |a_i| + \|j_{h_n} p_0 -p_{n} \|_{L^2(\Omega_{h_n})}.
\end{align*}
\end{proof}

By Proposition \ref{continuity of spectra}, we can take a constant $r>0$ such that $\lambda_1^0, \lambda_1^{h_n} > r$ for all $n \in \mathbb{N}$.
For any $n \in \mathbb{N}$ and $T>0$, we denote by
$$
\gamma_{h_n} (T) = \sup \{|e^{-\lambda_i^{h_n}t} - e^{-\lambda_i^0 t}| : 1 \leq i \leq m, -T \leq t \leq T\},
$$
$$
\rho(h_n) = \|F_{h_n} (j_{h_n} u) - j_{h_n} F_0(u)\|_{L^{\infty}(L^2(\Omega_0),L^2(\Omega_{h_n}))}.
$$
Then we observe that $\gamma_{h_n}(T) \to 0$ and $\rho(h_n) \rightarrow 0$ as $n \rightarrow \infty$.
For any $p \in \mathbb{R}^m$ and a bounded set $B \subset \mathbb{R}^m$, we denote by
$$
\beta_{h_n} (p) = \|\Phi_{h_n} (p) - j_{h_n} \Phi_0 (p) \|_{L^2(\Omega_{h_n})} ~ \hbox{and} ~ \beta_{h_n}(B) = \sup \{\beta_{h_n}(p): p \in B\}.
$$
Let $p_0(t)$ and $p_{n}(t)$ be the solutions of

\begin{equation} \label{eq:p_0}
\dfrac{dp_0}{dt} + A_0 p_0 = P_m^0 F_0 (p_0 + \Phi_0 (\psi_0 p_0)), ~\mbox{and}
\end{equation}
\begin{equation} \label{eq:p_h}
\dfrac{dp_{n}}{dt}  + A_{h_n} p_{n} = P_m^{h_n}F_{h_n} (p_n + \Phi_{h_n} (\psi_{h_n} p_n ))
\end{equation}
with initial conditions $p_0(0) = \psi_0^{-1} p$ and $p_n (0) = \psi_{h_n}^{-1} p$, respectively, for some $p \in \mathbb R^m$. With these notations, we have the following estimates.

\begin{lemma} \label{compare p(t)}
For any $T>0$ and a bounded subset $B$ of $\mathcal M_0$, there exists $C>0$ such that for any $t \in [-T,0]$
\begin{align*}
\|p_{n}(t) - j_{h_n} p_0(t) \|_{L^2(\Omega_{h_n})}
\leq& \Bigg( e^{(\lambda_m^0 +1) t} C \gamma_{h_n}(T) 
+ C \alpha(h_n)
+ \dfrac{1}{\lambda_m^0 +1} L_{h_n} \beta_{h_n} (\psi_0 B_{-T})  + \dfrac{1}{\lambda_m^0 +1} \rho({h_n})\\
&+ 2T e^{(\lambda_m^0 +1) t} C\gamma_{h_n} (T)
+ \dfrac{ C(2 +L_{h_n})}{\lambda_m^0 +1} \alpha(h_n) \Bigg) e^{(2L_{h_n}-\lambda_m^0 -1) t}, ~\forall n\in \mathbb N,
\end{align*}
and for any $t \in [0,T]$
\begin{align*}
\|p_{n}(t) -j_{h_n} p_0(t) \|_{L^2(\Omega_{h_n})}
&\leq   \Bigg( e^{r t} C \gamma_{h_n}(T)
+ C \alpha(h_n) 
+ e^{rt} \dfrac{L_{h_n}}{r} \beta_{h_n} (\psi_0 B_T) + e^{rt} \dfrac{\rho(h_n)}{r}\\
& \quad + 2T e^{r t} C\gamma_{h_n}(T) 
+ e^{rt}\dfrac{ C(2+L_{h_n})}{r} \alpha({h_n}) \Bigg) e^{(2L_{h_n}-r) t}, ~\forall n\in \mathbb N,
\end{align*}
where $p_0(t)$ and $p_n(t)$ are the solutions of  \eqref{eq:p_0} and \eqref{eq:p_h}, respectively, such that $p_0(0) \in P_m^0B$ and $\psi_0p_0(0) = \psi_{h_n}p_n(0)$, and $B_{-T} = \{p_0(t) : t \in [-T,0]\}$ and $B_T = \{p_0(t) : t \in [0,T]\}$.
\end{lemma}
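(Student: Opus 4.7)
The plan is to apply variation-of-constants to both \eqref{eq:p_0} and \eqref{eq:p_h}, subtract, and close a Gronwall inequality separately in the forward and backward time directions. The key structural fact is that $p_n(s)$ and $p_0(s)$ lie in the finite-dimensional spectral subspaces $P_m^{h_n}L^2(\Omega_{h_n})$ and $P_m^0 L^2(\Omega_0)$, so the semigroups restricted there are invertible, with norms $\|e^{-A_{h_n}\tau}\| \leq e^{-r\tau}$ for $\tau \geq 0$ and $\|e^{-A_{h_n}\tau}\| \leq e^{-(\lambda_m^0+1)\tau}$ for $\tau \leq 0$ (the latter using $|\lambda_m^{h_n} - \lambda_m^0| < 1$ from the proof of Theorem \ref{existence of IM}). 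These two rates will produce the exponential prefactors $e^{(2L_{h_n}-r)t}$ and $e^{(2L_{h_n}-\lambda_m^0-1)t}$ in the claimed bounds.

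Set $u_n(s) = p_n(s) + \Phi_{h_n}(\psi_{h_n}p_n(s))$ and $u_0(s) = p_0(s) + \Phi_0(\psi_0 p_0(s))$, and telescope $F_{h_n}(u_n(s)) - j_{h_n}F_0(u_0(s))$ through $F_{h_n}(j_{h_n}u_0(s))$ and $j_{h_n}F_0(u_0(s))$. Then $p_n(t) - j_{h_n}p_0(t)$ decomposes as a homogeneous term $e^{-A_{h_n}t}p_n(0) - j_{h_n}e^{-A_0 t}p_0(0)$ plus three integral pieces: a Lipschitz contribution bounded by $L_{h_n}\|u_n(s) - j_{h_n}u_0(s)\|$, an approximation contribution bounded uniformly by $\rho(h_n)$, and a spectral mismatch piece $\bigl(e^{-A_{h_n}(t-s)}P_m^{h_n}j_{h_n} - j_{h_n}e^{-A_0(t-s)}P_m^0\bigr)F_0(u_0(s))$. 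Since $\psi_{h_n}p_n(0) = \psi_0 p_0(0)$, the coordinates in the two eigenbases coincide, so the homogeneous and mismatch pieces expand into sums of eigenvalue differences (controlled by $\gamma_{h_n}(T)$) and eigenfunction differences $\phi_i^{h_n} - j_{h_n}\phi_i^0$ (controlled by $\alpha(h_n)$ via Lemma \ref{alpha}), with constants arising from the uniform $L^2$-bound on $|a_i|$ and on $F_0(u_0(s))$.

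To close the Gronwall loop in the Lipschitz piece, Lemma \ref{compare psi} together with $\mathrm{Lip}\,\Phi_{h_n} \leq 1$ yields
$$
\|u_n(s) - j_{h_n}u_0(s)\| \leq 2\|p_n(s) - j_{h_n}p_0(s)\| + C\alpha(h_n) + \beta_{h_n}(\psi_0 p_0(s)),
$$
and the factor $2$ is the origin of the Gronwall exponent $2L_{h_n}$. For $t \in [0,T]$ I would set $\phi(t) = e^{rt}\|p_n(t) - j_{h_n}p_0(t)\|$ and apply the standard integral Gronwall inequality; constant forcings integrated against $e^{-r(t-s)}$ produce the $1/r$ prefactors visible on several terms. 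For $t \in [-T,0]$ the semigroup bound is instead $e^{-(\lambda_m^0+1)(t-s)}$ with $t-s \leq 0$, and a backward Gronwall argument (either via the substitution $\tau = -t$ or directly from $\phi(t) \leq h(t) + K\int_t^0 \phi(s)\,ds$) produces the $1/(\lambda_m^0 + 1)$ prefactors together with the claimed factor $e^{(2L_{h_n}-\lambda_m^0-1)t}$.

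The main technical challenge is bookkeeping rather than a conceptual obstacle: the small parameters $\gamma_{h_n}(T)$, $\alpha(h_n)$, $\rho(h_n)$, and $\beta_{h_n}(\psi_0 B_{\pm T})$, together with the constants $L_{h_n}$, $r$, $\lambda_m^0 + 1$, and the horizon $T$, must all be tracked simultaneously through two Gronwall arguments with opposite time directions. One must also verify that the coefficient sum $\sum_{i=1}^m |a_i|$ appearing via Lemma \ref{compare psi} is uniformly bounded over $p_0(s) \in B_{\pm T}$, which holds because $B \subset \mathcal{M}_0$ is bounded and the reduced flow on the $m$-dimensional inertial manifold is globally Lipschitz; this in particular gives backward existence of $p_0(t)$ on $[-T,0]$ and allows $\sum |a_i|$ and $\|F_0(u_0(s))\|_{L^2(\Omega_0)}$ to be absorbed into a single constant $C$ independent of $n$.
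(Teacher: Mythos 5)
Your proposal is correct and follows essentially the same route as the paper: variation of constants, the same three-way decomposition into a homogeneous term, a Lipschitz/nonlinearity term handled via Lemma \ref{compare psi} and $\mathrm{Lip}\,\Phi_{h_n}\le 1$ (yielding the factor $2L_{h_n}$), and a spectral-mismatch term handled via $\gamma_{h_n}$ and $\alpha(h_n)$, closed by separate Gronwall arguments on $[0,T]$ with weight $e^{rt}$ and on $[-T,0]$ with weight $e^{(\lambda_m^0+1)t}$. Your two-step telescoping through $F_{h_n}(j_{h_n}u_0)$ combined with the expansion of $\|u_n - j_{h_n}u_0\|$ is algebraically equivalent to the paper's four-step telescope, so this is a presentational difference only, not a distinct argument.
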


\begin{proof}
Let $T>0$ be arbitrary, $B$ a bounded subset of $\mathcal M_0$, and denote $\hat{B}_{-T} = S_0 (B,[-T,0])$ and $\hat{B}_T = S_0 (B,[0,T])$.
 Let  $p_0(t)$ and $p_n(t)$ be the solutions of  \eqref{eq:p_0} and \eqref{eq:p_h}, respectively, such that $p_0(0) \in P_m^0B$ and $\psi_0 p_0(0) = \psi_{h_n}p_n(0)$. By the variation of constant formula for \eqref{eq:p_0} and \eqref{eq:p_h}, we have
\begin{align} 
p_{n}(t) - j_{h_n} p_0(t)
=& ~ e^{-A_{h_n} t} p_{n}(0) - j_{h_n} e^{-A_0 t} p_0(0) \notag \\
&+ \int_0^t e^{-A_{h_n} (t-s)} ( P_m^{h_n} F_{h_n}(p_{n} + \Phi_{h_n} (\psi_{h_n} p_n )) - P_m^{h_n} j_{h_n} F_0 (p_0 + \Phi_0 (\psi_0 p_0)) )ds \notag \\
&+ \int_0^t (e^{-A_{h_n}(t-s)}P_m^{h_n} j_{h_n} -  j_{h_n} e^{-A_0 (t-s)} P_m^0) F_0 ( p_0 + \Phi_0 (\psi_0 p_0)) ds \notag\\
&:= ~ I+ II + III,~\forall t\in[-T,T]. \label{p-jp}
\end{align}
Since $F_0 (\hat{B}_{-T})$ and $F_0 (\hat{B}_T)$ are bounded in $L^2(\Omega_0)$, there exists a constant $C>0$ such that for any $u = \sum_{i=1}^m a_i \phi_i^0$ in $P_m^0  \hat{B}_{-T} \cup P_m^0 \hat{B}_T$ and $v =\sum_{i=1}^m b_i \phi_i^0$ in $P_m^0F_0 (\hat{B}_{-T}) \cup P_m^0 F_0(\hat{B}_T)$, we have  $\sum_{i=1}^m |a_i| < C$ and $\sum_{i=1}^m |b_i| < C$.

Step 1. We first, we estimate $I$ for $t \in [-T,0]$. We write $p_0(0) = \sum_{i=1}^m a_i\phi_i^0$. Since
\begin{align*}
I
&= e^{-A_{h_n} t} \sum_{i=1}^m a_i \phi_i^{h_n} - j_{h_n} e^{-A_0 t} \sum_{i=1}^m a_i\phi_i^0 \notag \\
&= \sum_{i=1}^m a_i( e^{-\lambda_i^{h_n} t} - e^{-\lambda_i^0 t} ) \phi_i^{h_n} + \sum_{i=1}^m a_i e^{-\lambda_i^0 t} (\phi_i^{h_n} - j_{h_n} \phi_i^0),
\end{align*}
we have
\begin{align} \label{I1}
\| I \|_{L^2(\Omega_{h_n})}
&\leq \left\|\sum_{i=1}^m ( e^{-\lambda_i^{h_n} t} - e^{-\lambda_i^0 t} )a_i \phi_i^{h_n} \right\|_{L^2(\Omega_{h_n})}
+ \sum_{i=1}^m |a_i|  e^{-\lambda_i^0 t} \|\phi_i^{h_n} - j_{h_n} \phi_i^0\|_{L^2(\Omega_{h_n})} \notag \\
&\leq \gamma_{h_n}(T) \sum_{i=1}^m |a_i|
+ e^{-(\lambda_m^0 +1)t} \alpha({h_n}) \left(\sum_{i=1}^m |a_i|\right) \\
&\leq C\gamma_{h_n} (T) + C e^{-(\lambda_m^0 +1)t} \alpha({h_n}).
\end{align}

Step 2. We estimate $II$ for $t \in [-T,0]$. For this, we first consider the following.
\begin{align*}
F_{h_n} (p_{n} + &\Phi_{h_n} \psi_{h_n} p_{n}) - j_{h_n} F_0 (p_0 + \Phi_0 \psi_0 p_0) \\
=& ~ F_{h_n} (p_{n} + \Phi_{h_n} \psi_{h_n} p_{n}) - F_{h_n} (j_{h_n} p_0 + \Phi_{h_n} \psi_{h_n} p_{n}) \\
&+ F_{h_n} (j_{h_n} p_0 + \Phi_{h_n} \psi_{h_n} p_{n}) - F_{h_n} (j_{h_n} p_0 + \Phi_{h_n}  \psi_0 p_0) \notag \\
&+ F_{h_n} (j_{h_n} p_0 + \Phi_{h_n}  \psi_0 p_0) - F_{h_n} (j_{h_n} p_0 + j_{h_n} \Phi_0  \psi_{0} p_{0})\\
&+ F_{h_n} (j_{h_n} p_0 + j_{h_n} \Phi_0  \psi_0 p_0) - j_{h_n} F_0(p_0 + \Phi_0  \psi_{0} p_{0}).
\end{align*}
By Lemma \ref{compare psi}, we have
\begin{align*}
	\| F_{h_n} (p_{n} + &\Phi_{h_n} \psi_{h_n} p_{n}) - j_{h_n} F_0 (p_0 + \Phi_0 \psi_0 p_0) \|_{L^2(\Omega_{h_n})} \\
&\leq 2L_{h_n}\|p_{n}(s) - j_{h_n}p_0(s)\|_{L^2(\Omega_{h_n})} + L_{h_n}\beta_{h_n} (\psi_0 B_{-T}) +  CL_{h_n}\alpha(h_n) + \rho(h_n).
\end{align*}
Hence we get
\begin{align} \label{I2}
\| II \|_{L^2(\Omega_{h_n})} 
&\leq  2L_{h_n} \int_t^0 e^{-A_{h_n} (t-s)} \|p_{n}(s) - j_{h_n} p_0(s)\|_{L^2(\Omega_{h_n})} ds + L_{h_n} \int_t^0 e^{-A_{h_n} (t-s)} \beta_{h_n} (\psi_0 B_{-T}) ds \notag\\
& \quad + CL_{h_n}\int_t^0e^{-A_{h_n} (t-s)} \alpha({h_n}) ds
+ \int_t^0 e^{-A_{h_n} (t-s)} \rho({h_n}) ds \notag \\
&\leq 2L_{h_n} \int_t^0 e^{-(\lambda_m^0 +1) (t-s)} \|p_{n}(s) - j_{h_n} p_0(s)\|_{L^2(\Omega_{h_n})} ds \notag\\
& \quad + L_{h_n} \int_t^0 e^{-(\lambda_m^0 +1) (t-s)} \beta_{h_n} (\psi_0 B_{-T}) ds
+  CL_{h_n}\int_t^0 e^{-(\lambda_m^0 +1) (t-s)} \alpha({h_n}) ds \notag\\
& \quad + \int_t^0 e^{-(\lambda_m^0 +1) (t-s)} \rho({h_n}) ds  \notag   \\
&\leq 2L_{h_n} e^{-(\lambda_m^0 +1)t} \int_t^0 e^{(\lambda_m^0+1)s} \|p_{n} (s) - j_{h_n} p_0(s) \|_{L^2(\Omega_{h_n})} ds \notag \\
& \quad + L_{h_n} \dfrac{e^{-(\lambda_m^0 +1)t}}{\lambda_m^0 +1}  \beta_{h_n} (\psi_0 B_{-T}) +  CL_{h_n}\dfrac{e^{-(\lambda_m^0 +1)t}}{\lambda_m^0 +1}  \alpha({h_n})
+ \dfrac{e^{-(\lambda_m^0 +1)t}}{\lambda_m^0 +1}  \rho({h_n}),
\end{align}
where we have used the fact that $\int_t^0 e^{(\lambda_m^0 +1)s} < \frac{1}{\lambda_m^0 +1}$ in the last inequality.

Step 3. We estimate $III$ for $t \in [-T,0]$. For this, we first consider the following
\begin{align*}
&(j_{h_n} e^{-A_0 (t-s)} P_m^0 - e^{-A_{h_n} (t-s)} P_m^{h_n} j_{h_n}) (v_0) \\
&= j_{h_n} \sum_{i=1}^m (e^{-\lambda_i^0 (t-s)} - e^{-\lambda_i^{h_n} (t-s)} ) b_i \phi_i^0 
+ \sum_{i=1}^m e^{-\lambda_i^{h_n} (t-s)} b_i (j_{h_n} \phi_i^0 - \phi_i^{h_n})\\
& \ \ \ + e^{-A_{h_n} (t-s)} (P_m^{h_n} v_{n} - P_m^{h_n} j_{h_n} v_0)\\
&:= III_1 + III_2 + III_3,
\end{align*}
where $v_0 = \sum_{i=1}^\infty b_i\phi_i^0 \in F_0(\hat{B}_{-T})$, and $v_{n} = \sum_{i=1}^\infty b_i \phi_i^{h_n} \in L^2(\Omega_{h_n})$.
For any $s \in (t,0]$, we have
\begin{align*}
\|III_1 \|_{L^2(\Omega_{h_n})} 
&= \left\| j_{h_n} \sum_{i=1}^m (e^{-\lambda_i^0 (t-s)} - e^{-\lambda_i^{h_n} (t-s)} ) b_i \phi_i^0 \right\|_{L^2(\Omega_{h_n})} \\
&\leq 2 \left\| \sum_{i=1}^m (e^{-\lambda_i^0 (t-s)} - e^{-\lambda_i^{h_n} (t-s)} ) b_i \phi_i^0 \right\|_{L^2(\Omega_{0})} \leq 2 \gamma_{h_n} (T)  \sum_{i=1}^m |b_i| \leq  2 C \gamma_{h_n} (T),
\end{align*}

\begin{align*}
\|III_2 \|_{L^2(\Omega_{h_n})} 
&= \left \| \sum_{i=1}^m e^{-\lambda_i^{h_n} (t-s)} b_i (j_{h_n} \phi_i^0 - \phi_i^{h_n}) \right \|_{L^2(\Omega_{h_n})}  \\ 
& \le e^{-(\lambda_m^0 +1 ) (t-s)} \alpha({h_n}) \sum_{i=1}^m |b_i| \leq C e^{-(\lambda_m^0 +1 ) (t-s)} \alpha({h_n} ), ~\mbox{and}\\
\|III_3 \|_{L^2(\Omega_{h_n})}^2 &= \sum_{i=1}^m e^{-2\lambda_i^{h_n} (t-s)} |(P_m^{h_n} v_{n} - P_m^{h_n} j_{h_n} v_0,\phi_i^{h_n})|^2 \leq e^{-2(\lambda_m^0 +1 ) (t-s)} \left(\alpha(h_n) \sum_{i=1}^m |b_i| \right)^2.
\end{align*}
Hence we see that
\begin{align*}
\|III_3 \|_{L^2(\Omega_{h_n})}
\leq e^{-(\lambda_m^0 +1 ) (t-s)} \alpha(h_n) \sum_{i=1}^m |b_i|
\leq C e^{-(\lambda_m^0 +1 ) (t-s)} \alpha({h_n}).
\end{align*}
Since $\int_t^0 e^{(\lambda_m^0 +1)s} ds <\frac{1}{\lambda_m^0 +1}$, we have
\begin{align} \label{I3}
\|III \|_{L^2(\Omega_{h_n})} &\leq \int_t^0 2 C \gamma_{h_n} (T) ds
+ \int_t^0 2 C e^{-(\lambda_m^0 +1 ) (t-s)} \alpha(h_n) ds \notag\\
&\leq 2T C \gamma_{h_n}(T) + 2 C \alpha({h_n})  \dfrac{e^{-(\lambda_m^0 +1)t}}{\lambda_m^0+1}.
\end{align}

Step 4. We estimate $\|p_{n}(t) - j_{h_n} p_0(t)\|_{L^2(\Omega_{h_n})}$ for $t \in [-T,0]$.
By putting \eqref{I1}, \eqref{I2} and \eqref{I3} together into \eqref{p-jp}, we get 
\begin{align}
\|p_{n}(t) - j_{h_n} p_0(t)\|_{L^2(\Omega_{h_n})}
\leq& ~ C \gamma_{h_n} (T) 
+ C e^{-(\lambda_m^0 +1)t} \alpha({h_n}) \notag \\
&+ 2e^{-(\lambda_m^0 +1) t}L_{h_n} \int_t^0 e^{(\lambda_m^0 +1) s}
\|p_{n}(s) - j_{h_n} p_0(s)\|_{L^2(\Omega_{h_n})} ds \notag \\
&+ \dfrac{e^{-(\lambda_m^0 +1) t}}{\lambda_m^0 +1}  L_{h_n} \beta_{h_n} (\psi_0 B_{-T}) + \rho({h_n}) \dfrac{e^{-(\lambda_m^0 +1) t}}{\lambda_m^0 +1} \notag  \\
&+ 2T C \gamma_{h_n} (T)
+ C(2 +L_{h_n}) \alpha(h_n) \dfrac{e^{-(\lambda_m^0 +1)t}}{\lambda_m^0 +1} .\label{estimate p-}
\end{align}
Let  $g(t)= e^{(\lambda_m^0 +1) t} \|p_{h_n}(t) -j_{h_n} p_0(t)\|_{L^2(\Omega_{h_n})}$. Multiply both sides of \eqref{estimate p-} by $e^{(\lambda_m^0 +1) t}$ to get
\begin{align*}
g(t) 
\leq&  e^{(\lambda_m^0 +1) t} C \gamma_{h_n} (T)
+ C \alpha({h_n})
+ 2L_{h_n} \int_t^0 g(s) ds  \\
&+ \dfrac{1}{\lambda_m^0 +1} L_{h_n} \beta_{h_n} (\psi_0 B_{-T})  + \dfrac{1}{\lambda_m^0 +1} \rho(h_n) 
+ 2T e^{(\lambda_m^0 +1) t} C \gamma_{h_n} (T) 
+ \dfrac{C(2 +L_{h_n})}{\lambda_m^0 +1} \alpha(h_n).
\end{align*}

\noindent
By applying the Gronwall's inequality, we derive that 
\begin{align*}
g(t)
\leq& \Big( e^{(\lambda_m^0 +1) t} C \gamma_{h_n} (T) 
+ C \alpha(h_n)
+ \dfrac{1}{\lambda_m^0 +1} L_{h_n} \beta_{h_n} (\psi_0 B_{-T})  + \dfrac{1}{\lambda_m^0 +1} \rho({h_n})\\
&+ 2T e^{(\lambda_m^0 +1) t} C \gamma_{h_n} (T)
+ \dfrac{ C(2 +L_{h_n})}{\lambda_m^0 +1} \alpha(h_n) \Big) e^{2L_{h_n} t}.
\end{align*}

\noindent
Consequently for any $t \in [-T,0]$, we have
\begin{align*}
\|p_{n}(t) - j_{h_n} p_0(t) \|_{L^2(\Omega_{h_n})}
\leq& \Bigg( e^{(\lambda_m^0 +1) t} C \gamma_{h_n} (T) 
+ C \alpha(h_n)
+ \dfrac{1}{\lambda_m^0 +1} L_{h_n} \beta_{h_n} (\psi_0 B_{-T})  + \dfrac{1}{\lambda_m^0 +1} \rho({h_n})\\
&+ 2T e^{(\lambda_m^0 +1) t} C\gamma_{h_n} (T)
+ \dfrac{ C(2 +L_{h_n})}{\lambda_m^0 +1} \alpha(h_n) \Bigg) e^{(2L_{h_n}-\lambda_m^0 -1) t}.
\end{align*}

Step 5. Finally we estimate $\|p_{n}(t) - j_{h_n} p_0(t)\|_{L^2(\Omega_{h_n})}$ for $t \in [0,T]$.
By the same techniques as in Step 1, we have
\begin{align*}
\|I\|_{L^2(\Omega_{h_n})} \leq C \gamma_{h_n}(T)  + e^{-rt} C \alpha({h_n}).
\end{align*}

\noindent
Furthermore we obtain
\begin{align*}
\|II\|_{L^2(\Omega_{h_n})} &\leq 2L_{h_n} \int_0^t e^{-A_{h_n} (t-s)} \|p_{n}(s) - j_{h_n} p_0(s) \|_{L^2(\Omega_{h_n})}  ds
+ L_{h_n} \beta_{h_n} (\psi_0 B_T) \int_0^t e^{-A_{h_n} (t-s)} ds \\
&\quad + CL_{h_n}\alpha(h_n)\int_0^t e^{-A_{h_n} (t-s)} ds
+ \int_0^t e^{-A_{h_n} (t-s)} \rho({h_n})ds\\
&\leq 2L_{h_n} e^{-rt} \int_0^t e^{rs} \|p_{n} (s) -j_{h_n} p_0(s) \|_{L^2(\Omega_{h_n})} ds 
+ L_{h_n} \beta_{h_n} (\psi_0 B_T) e^{-rt} \int_0^t e^{rs} ds \\
&\quad + CL_{h_n}\alpha(h_n) e^{-rt} \int_0^t e^{rs} ds 
+ \rho(h_n) e^{-rt} \int_0^t e^{rs}ds\\
&\leq 2L_{h_n} e^{-rt} \int_0^t e^{rs} \|p_{n}(s) -j_{h_n} p_0(s) \|_{L^2(\Omega_{h_n})} ds 
+ \dfrac{L_{h_n}}{r} \beta(h_n) + \dfrac{CL_{h_n}}{r} \alpha(h_n) 
+ \dfrac{\rho({h_n})}{r},
\end{align*}
where we have used the fact $\int_0^t e^{rs}ds \leq e^{rt}/ r$  for the last inequality.

For the estimate of $III$, we consider
\begin{align*}
\|III_1 \|_{L^2(\Omega_{h_n})} &\leq 2 \gamma_{h_n}(T)\sum_{i=1}^m |b_i| \leq 2 C \gamma_{h_n}(T), \\
\|III_2 \|_{L^2(\Omega_{h_n})}
&\leq \left\|\sum_{i=1}^m e^{-\lambda_i^h (t-s)} b_i(j_{h_n} \phi_i^0 - \phi_i^{h_n}) \right\|_{L^2(\Omega_{h_n})}
\\
&\leq e^{-r(t-s)} \alpha({h_n}) \sum_{i=1}^m |b_i|
\leq e^{-r(t-s)} C \alpha({h_n}), ~\mbox{and}\\
\|III_3 \|_{L^2(\Omega_{h_n})} & \leq e^{-r(t-s)} \alpha({h_n}) \sum_{i=1}^m |b_i| \leq  e^{-r(t-s)} C \alpha({h_n}).
\end{align*}
Then we get
\begin{align*}
\|III\|_{L^2(\Omega_{h_n})}  &\leq \int_0^t 2C \gamma_{h_n}(T) ds
+ \int_0^t 2 e^{-r (t-s)} C \alpha({h_n})  ds \leq 2T C \gamma_{h_n}(T) + \dfrac{2 C }{r} \alpha(h_n).
\end{align*}
Consequently we derive that
\begin{align}
\|p_{n}(t) -j_{h_n} p_0(t)\|_{L^2(\Omega_{h_n})}  
\leq& 
2 e^{-r t}L_{h_n} \int_0^t e^{r s} \|p_{n}(s) - j_{h_n} p_0(s)\|_{L^2(\Omega_{h_n})} ds + C \gamma_{h_n}(T)  + e^{-rt} C \alpha({h_n}) \notag \\
& + \dfrac{L_{h_n}}{r}  \beta_{h_n} (\psi_0 B_T)
+ \dfrac{\rho({h_n})}{r} + 2T C \gamma_{h_n}(T)
+ \dfrac{ C(2+L_{h_n})}{r} \alpha({h_n}). \label{estimate p+}
\end{align}
Let $g(t) = e^{rt} \|p_{n}(t) -j_{h_n} p_0(t)\|_{L^2(\Omega_{h_n})}$. Multiply both sides of \eqref{estimate p+} by $e^{rt}$ to deduce that
\begin{align*}
g(t)
&\leq 2 L_{h_n} \int_0^t g(s) ds + e^{r t} C \gamma_{h_n}(T)+ C \alpha(h_n) + e^{rt} \dfrac{L_{h_n}}{r} \beta_{h_n} (\psi_0 B_T) \\
& \quad + e^{rt} \dfrac{\rho({h_n})}{r} 
+ 2T e^{rt} C \gamma_{h_n}(T) 
+  e^{rt}\dfrac{ C(2+L_{h_n})}{r} \alpha({h_n}).
\end{align*}
By the Gronwall's inequality, we get

\begin{align*}
g(t)
\leq& \Bigg( e^{r t} C \gamma_{h_n}(T)
+ C \alpha(h_n) 
+ e^{rt} \dfrac{L_{h_n}}{r} \beta_{h_n} (\psi_0 B_T) + e^{rt} \dfrac{\rho(h_n)}{r} \\
&+ 2T e^{r t} C \gamma_{h_n}(T) 
+ e^{rt}\dfrac{ C(2+L_{h_n})}{r} \alpha({h_n}) \Bigg) e^{2L_{h_n} t}.
\end{align*}

Finally we deduce that
\begin{align*}
\|p_{n}(t) -j_{h_n} p_0(t) \|_{L^2(\Omega_{h_n})}
&\leq   \Bigg( e^{r t} C \gamma_{h_n}(T)
+ C \alpha(h_n) 
+ e^{rt} \dfrac{L_{h_n}}{r} \beta_{h_n} (\psi_0 B_T) + e^{rt} \dfrac{\rho(h_n)}{r}\\
& \quad + 2T e^{r t} C\gamma_{h_n}(T) 
+ e^{rt}\dfrac{ C(2+L_{h_n})}{r} \alpha({h_n}) \Bigg) e^{(2L_{h_n}-r) t}.
\end{align*}
\end{proof}

In the following lemma, we estimate the linear semigroups of orthogonal complements.

\begin{lemma} \label{orthogonal_est}

For any $\varepsilon >0$, $T>0$ and a bounded subset $B$ of $L^2(\Omega_0)$, there is $K>0$ such that for any $u \in B$ and $n \geq K$
$$
\int_0^T \|e^{-A_{h_n}t} Q_m^{h_n} j_{h_n} u - j_{h_n} e^{-A_0t} Q_m^0 u \|_{L^2(\Omega_{h_n})} dt < \varepsilon.
$$
\end{lemma}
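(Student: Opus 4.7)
The plan is to use the algebraic identity $Q_m = I - P_m$ to decompose
\[
D_n(u,t) := e^{-A_{h_n}t} Q_m^{h_n} j_{h_n} u - j_{h_n} e^{-A_0 t} Q_m^0 u
\]
as $D_n = E_n - F_n$, where $E_n(t) := e^{-A_{h_n}t} j_{h_n} u - j_{h_n} e^{-A_0 t} u$ and $F_n(t) := e^{-A_{h_n}t} P_m^{h_n} j_{h_n} u - j_{h_n} e^{-A_0 t} P_m^0 u$, and to show separately that $\int_0^T \|E_n(t)\|_{L^2(\Omega_{h_n})}\,dt$ and $\int_0^T \|F_n(t)\|_{L^2(\Omega_{h_n})}\,dt$ tend to $0$ uniformly for $u \in B$.

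The finite-rank piece $F_n$ can be handled by an expansion entirely analogous to Step~1 of the proof of Lemma~\ref{compare p(t)}. Writing $P_m^0 u = \sum_{i=1}^m a_i \phi_i^0$ and $P_m^{h_n} j_{h_n} u = \sum_{i=1}^m b_i^n \phi_i^{h_n}$ with $b_i^n := (j_{h_n} u, \phi_i^{h_n})_{L^2(\Omega_{h_n})}$, one telescopes $F_n(t)$ into three pieces bounded respectively by $\gamma_{h_n}(T)\sum |a_i|$, $\alpha(h_n) \sum |a_i|$, and $\sum|b_i^n - a_i|$. The coefficient difference tends to $0$ uniformly in $u \in B$ with $\|u\| \le R$, since the change-of-variable identity $(j_{h_n} u, j_{h_n}\phi_i^0)_{L^2(\Omega_{h_n})} = \int_{\Omega_0} u\,\phi_i^0\,|\det Dh_n|\,dy$ gives $|(j_{h_n} u, j_{h_n}\phi_i^0) - (u,\phi_i^0)| \le \||\det Dh_n|-1\|_{L^\infty} R$, which combines with $|(j_{h_n} u, \phi_i^{h_n} - j_{h_n}\phi_i^0)| \le 2R\,\alpha(h_n)$ from Lemma~\ref{alpha}. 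Using $\sum_{i=1}^m |a_i| \le \sqrt{m}\,R$, we conclude $\sup_{t \in [0,T]} \|F_n(t)\|_{L^2(\Omega_{h_n})} \to 0$ uniformly for $u \in B$.

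For $E_n$, observe $E_n(0) = 0$ and that $E_n$ satisfies the inhomogeneous heat equation
\[
\partial_t E_n + A_{h_n} E_n = g_n(t), \qquad g_n(t) := (j_{h_n} A_0 - A_{h_n} j_{h_n})\, e^{-A_0 t} u
\]
on $\Omega_{h_n}$, so by Duhamel's formula $E_n(t) = \int_0^t e^{-A_{h_n}(t-s)} g_n(s)\,ds$. Expressing both $\langle A_{h_n} j_{h_n} v, \psi\rangle_{L^2(\Omega_{h_n})}$ and $\langle j_{h_n} A_0 v, \psi\rangle_{L^2(\Omega_{h_n})}$ as integrals over $\Omega_0$ via the change of variables $y = h_n^{-1}(x)$ (with $M(y) = Dh_n(y)$) and subtracting, the leading difference is a bilinear form with coefficient matrix $|\det M|(M^T - M^{-T})$, which vanishes in $L^\infty$ at rate $d_{C^1}(h_n, id)$; this yields the commutator estimate
\[
\|g_n(s)\|_{H^{-1}(\Omega_{h_n})} \le C\, d_{C^1}(h_n, id)\, \|\nabla e^{-A_0 s} u\|_{L^2(\Omega_0)}.
\]
Using the parabolic smoothing $\|\nabla e^{-A_0 s} u\|_{L^2} \le C s^{-1/2} \|u\|_{L^2}$, the mapping bound $\|e^{-A_{h_n}(t-s)}\|_{H^{-1}\to L^2} \le C(t-s)^{-1/2}$, and $\int_0^t (t-s)^{-1/2} s^{-1/2}\,ds = \pi$, we get $\|E_n(t)\|_{L^2(\Omega_{h_n})} \le C\pi\,d_{C^1}(h_n, id)\, R$, uniformly in $t \in [0,T]$ and $u \in B$. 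Integration over $[0,T]$ yields $\int_0^T \|E_n\|\,dt \to 0$ uniformly, completing the proof.

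The main obstacle is the commutator estimate for $g_n$. It must be arranged, via an appropriate weak formulation, so that only first-order derivatives of $h_n$ enter the coefficients of the bilinear form (since ${\rm Diff}(\Omega_0)$ carries only the $C^1$ topology and $h_n$ need not be $C^2$); one also needs the parabolic smoothing constants for $\{e^{-A_{h_n}t}\}$ to be uniform in $n$, which follows from the uniform Poincaré inequality on $\{\Omega_{h_n}\}$ as $h_n \to id$ in $C^1$.
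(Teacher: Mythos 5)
Your decomposition $Q_m = I - P_m$ and the treatment of the finite\mbox{-}rank piece $F_n$ are fine and close in spirit to the mode\mbox{-}by\mbox{-}mode estimates already used in Lemma~\ref{compare p(t)}. The gap is in the $E_n$ piece, precisely at the commutator estimate you yourself flag as ``the main obstacle.'' Write $v = e^{-A_0 s}u$ and test $g_n(s)$ against $\psi \in H_0^1(\Omega_{h_n})$. The term $\langle A_{h_n} j_{h_n} v, \psi\rangle$ does become, after the change of variables $y = h_n^{-1}(x)$, a bilinear form in $\nabla v$ and $\nabla(\psi\circ h_n)$ with coefficients depending only on $Dh_n$. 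But the other term,
\[
\langle j_{h_n} A_0 v, \psi\rangle_{L^2(\Omega_{h_n})} \;=\; \int_{\Omega_0} (-\Delta v)(y)\,\psi(h_n(y))\,|\det Dh_n(y)|\,dy,
\]
must be integrated by parts in $y$ to reach a first\mbox{-}order form, and the product rule then produces
\[
\int_{\Omega_0} |\det Dh_n|\,\nabla v\cdot\nabla(\psi\circ h_n)\,dy \;+\; \int_{\Omega_0} (\psi\circ h_n)\,\nabla v\cdot\nabla\bigl(|\det Dh_n|\bigr)\,dy.
\]
The second integral involves $\nabla|\det Dh_n|$, i.e.\ second derivatives of $h_n$. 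Since $\mathrm{Diff}(\Omega_0)$ carries only the $C^1$ topology, this term is not even defined for a general $h_n$ in the class the paper works with, let alone small of order $d_{C^1}(h_n,id)$. Your stated coefficient matrix $|\det M|(M^T - M^{-T})$ does not account for this Jacobian\mbox{-}gradient term, and the claim that ``it must be arranged, via an appropriate weak formulation, so that only first\mbox{-}order derivatives of $h_n$ enter'' is precisely what remains to be justified. Pulling back $A_{h_n}$ instead to the weighted space $L^2(\Omega_0, |\det Dh_n|\,dy)$ removes the offending gradient of the Jacobian but replaces it by a comparison of semigroups of operators self\mbox{-}adjoint in two different inner products, which is again a nontrivial step your sketch does not carry out.

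For contrast, the paper's proof deliberately never forms this commutator: it splits $\int_0^T = \int_0^\delta + \int_\delta^T$, kills $[0,\delta]$ by the crude bound $\le 4\|u\|$, and on $[\delta,T]$ truncates the eigenfunction expansion at a large index $k$ so the tail decays like $e^{-\lambda_{k+1}^0\delta}$ and the head is handled by the spectral convergence of Proposition~\ref{continuity of spectra} and Lemma~\ref{alpha}. That argument stays entirely within the $C^1$ framework. If you want to rescue the Duhamel route, you would need to either (i) prove the commutator bound in a weak form that genuinely avoids $\nabla|\det Dh_n|$, or (ii) approximate $h_n$ by $C^2$ diffeomorphisms uniformly in the relevant estimates; as written, the step does not go through.
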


\begin{proof}
Since $B$ is bounded,  we can choose $\delta > 0$ and $k \in \mathbb{N}$ $(k > m)$ such that 
$$
4\delta \|u\|_{L^2(\Omega_0)} <{\varepsilon}/2~\mbox{and}~2 e^{-(\lambda_{k+1}^0 -1)\delta} \|u \|_{L^2(\Omega_0)}< \varepsilon/{6(T-\delta}), ~\forall u \in B.
$$
By Proposition \ref{continuity of spectra} and Lemma \ref{alpha}, we can take a subsequence of $\{h_n\}_{n \in \mathbb{N}}$, still denoted by $\{h_n\}_{n \in \mathbb{N}}$, and the first $k$ eigenfunctions, denoted by $\{\phi_1^0, \ldots, \phi_k^0\}$, with respect to $k$ eigenvalues $\{\lambda_1^0, \ldots, \lambda_k^0\}$ such that
\begin{align*}
\gamma_k (h_n)&:=\sup \{ |e^{-\lambda_i^{h_n} t} - e^{-\lambda_i^0 t}| : 1 \leq i \leq k, 0 \leq t \leq T  \} \rightarrow 0, ~\mbox{and} \\
\alpha_k (h_n)&:= \sup \{\|\phi_i^{h_n} - j_{h_n} \phi_i^0 \|_{L^2(\Omega_{h_n})} : 1 \leq i \leq k \} \rightarrow 0 ~\mbox{as}~ n \rightarrow \infty.
\end{align*}
For any $u = \sum_{i=1}^\infty a_i \phi_i^0$  in $B$ and $t \in [0,\delta]$, we have 
\begin{align*}
&\|e^{-A_{h_n}t} Q_m^{h_n} j_{h_n} u- j_{h_n} e^{-A_0t} Q_m^0 u \|_{L^2(\Omega_{h_n})} \notag \\
&\le \|e^{-A_{h_n}t} Q_m^{h_n} j_{h_n} u\|_{L^2(\Omega_{h_n})}+ 2 \|e^{-A_0t} Q_m^0 u \|_{L^2(\Omega_0)} \le 4\|u\|_{L^2(\Omega_0)}.
\end{align*}
This implies that
\begin{equation}  \label{delta}
\int_0^{\delta}\|e^{-A_{h_n}t} Q_m^{h_n} j_{h_n} u- j_{h_n} e^{-A_0t} Q_m^0 u \|_{L^2(\Omega_{h_n})} dt < \dfrac{\varepsilon}{2}.
\end{equation}

For any $t \in [\delta, T]$, we obtain
\begin{align*}
\|e^{-A_{h_n}t}& Q_m^{h_n} j_{h_n} u- j_{h_n} e^{-A_0t} Q_m^0 u\|_{L^2(\Omega_{h_n})}   \\
& \leq \left\| \sum_{i=m+1}^k e^{-\lambda_i^{h_n} t} a_i Q_m^{h_n} j_{h_n} \phi_i^0 - j_{h_n} \sum_{i=m+1}^k e^{-\lambda_i t} a_i \phi_i^0 \right\|_{L^2(\Omega_{h_n})}   \\
& \ \ \ + \left\| \sum_{i=k+1}^\infty  e^{-\lambda_i^{h_n} t} a_i Q_m^{h_n} j_{h_n} \phi_i^0 \right\|_{L^2(\Omega_{h_n})} + \left\| j_{h_n} \sum_{i=k+1}^\infty e^{-\lambda_i^0 t}  a_i \phi_i^0  \right\|_{L^2(\Omega_{h_n})} \\
&:= I + II + III.
\end{align*}

We first estimate $I$ as follows.
\begin{align*}
I &\leq \left\| \sum_{i=m+1}^k  ( e^{-\lambda_i^{h_n} t} -  e^{-\lambda_i^0 t} ) a_i Q_m^{h_n} j_{h_n} \phi_i^0\right\|_{L^2(\Omega_{h_n})} \\
& \ \ \ + \left\| \sum_{i=m+1}^k  e^{-\lambda_i^0 t} a_i Q_m^{h_n} j_{h_n} \phi_i^0 
-  e^{-\lambda_i^0 t} a_i \phi_i^{h_n}  \right\|_{L^2(\Omega_{h_n})} \\
& \ \ \ + \left\| \sum_{i=m+1}^k e^{-\lambda_i^0 t} a_i \phi_i^{h_n}
-  \sum_{i=m+1}^k e^{-\lambda_i^0 t} a_i j_{h_n} \phi_i^0  \right\|_{L^2(\Omega_{h_n})}   \\
&\leq \gamma_k(h_n) \left\| \sum_{i=m+1}^k a_i Q_m^{h_n} j_{h_n} \phi_i^0 \right\|_{L^2(\Omega_{h_n})}
+2 \alpha_k(h_n) \sum_{i=m+1}^k  |a_i|.
\end{align*}
Since $\gamma_k(h_n) \rightarrow 0$ and $\alpha_k(h_n) \rightarrow 0$ as $n \rightarrow \infty$, there exists $K \in \mathbb{N}$ such that if $n \ge K$, then we have $I < \varepsilon /6(T-\delta)$. 

On the other hand, by the choice of $\delta$ and $k$, we have
\begin{align*}
II &\leq \|j_{h_n}\|  \sum_{i=k+1}^\infty e^{-\lambda_i^{h_n} t} |a_i|
\leq 2 e^{-\lambda_{k+1}^h \delta} \|u \|_{L^2(\Omega_0)} < \dfrac{\varepsilon}{{6(T-\delta})}, ~\mbox{and}~\\
III &\leq \|j_{h_n} \| \sum_{i=k+1}^\infty e^{-\lambda_i^{h_n} t} |a_i|
\leq 2 e^{-\lambda_{k+1}^h \delta} \|u \|_{L^2(\Omega_0)} < \dfrac{\varepsilon}{{6(T-\delta})}.
\end{align*}

Consequently we get
\begin{equation}\label{delta comp}
\int_{\delta}^T\|e^{-A_{h_n}t} Q_m^{h_k} j_{h_k} u - j_{h_k} e^{-A_0 t} Q_m^0 u \|_{L^2(\Omega_{h_k})} dt  \le \int_{\delta}^T (I + II + III) dt < \dfrac{\varepsilon}{2}, ~\forall u \in B.
\end{equation}
By \eqref{delta} and \eqref{delta comp}, we derive that
$$
\int_0^T\|e^{-A_{h_k}t} Q_m^{h_k} j_{h_k} u - j_{h_k} e^{-A_0 t} Q_m^0 u \|_{L^2(\Omega_{h_k})} dt<{\varepsilon}.
$$
This completes the proof.
\end{proof}

In the proof of Theorem \ref{existence of IM}, we know that $\lambda_{m+1}^{h} - \lambda_{m}^h > 2 \sqrt{2} L_h$ if $d_{C^1}(h,id)$ is sufficiently small. Then by applying the Lyapunov-Perron method, we see that 
\begin{align}
\Phi_{h_n}(p)  &= \int_{-\infty}^0 e^{A_{h_n} s} Q_m^{h_n} F_{h_n}(p_n(s) + \Phi_{h_n}(\psi_{h_n}p_n(s)))ds,  ~\mbox{and} \notag \\
\Phi_0 (p)  &= \int_{-\infty}^0 e^{A_{0} s} Q_m^{0} F_{0}(p_0(s) + \Phi_{0}(\psi_{0}p_0(s)))ds, ~\forall  p \in \mathbb R^m,~\forall  n \in \mathbb N, \label{lyapunov}
\end{align}
where $p_n(t)$ and $p_0(t)$ are the solutions of  \eqref{eq:p_h} and \eqref{eq:p_0} with initial conditions $p_n(0) = \psi_{h_n}^{-1}(p)$ and $p_0(0) = \psi_0^{-1}(p)$, respectively, for some $p \in \mathbb{R}^m$ (for more details, see \cite{Ro}).
Since $\overline{d}_{C^1} (f_{h_n},f_0) \rightarrow 0$ as $n \rightarrow \infty$, we can take $M_F >0$ such that for sufficiently large $n$, 
$$
\max\{\|F_0 (u_0)\|_{L^2(\Omega_0)},~\|F_{h_n} (u_n)\|_{L^2(\Omega_{h_n})} \} \le M_F, ~\forall u_0 \in L^2(\Omega_0), ~\forall u_n \in L^2(\Omega_{h_n}).
$$
By Theorem 1 in \cite{Ro}, we see that
$$
\|\Phi_0 (p)\|_{L^2(\Omega_0)}< \dfrac{M_F }{ \lambda_{m+1}^0} ~\mbox{and}~ \|\Phi_{h_n} (p)\|_{L^2(\Omega_{h_n})}< \dfrac{M_F }{ \lambda_{m+1}^{h_n}}.
$$
By Proposition \ref{continuity of spectra}, we can assume that $\lambda_{m+1}^{h_n} \rightarrow \lambda_{m+1}^0$ as $n \rightarrow \infty$. Then there is $M>0$ such that 
\begin{equation} \label{beta}
\beta_{h_n} (\mathbb{R}^m) < M, ~\forall n \in \mathbb{N}.
\end{equation}
For simplicity, we denote $F_{h_n}$ and $F_0$ by $ F_{h_n}=F_{h_n}(p_n + \Phi_{h_n}(\psi_{h_n}p_n))$ and $ F_0=F_0(p_0 + \Phi_{0}(\psi_{0}p_0))$. With the notations, we have the following lemma.

\begin{lemma} \label{distance of IM}
For any bounded set $B \subset \mathcal{M}_0$, 
$\beta_{h_n} (\psi_0 P_m^0 B) \rightarrow 0$ as $n \rightarrow \infty$.
\end{lemma}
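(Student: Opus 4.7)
The strategy is to subtract the two Lyapunov--Perron representations in \eqref{lyapunov} and bootstrap through a contraction-type argument. For $p\in V:=\psi_0 P_m^0 B$, I would write
\[
\Phi_{h_n}(p)-j_{h_n}\Phi_0(p)=\int_{-\infty}^0 e^{A_{h_n}s}Q_m^{h_n}\bigl(F_{h_n}-j_{h_n}F_0\bigr)\,ds+\int_{-\infty}^0 \bigl(e^{A_{h_n}s}Q_m^{h_n}j_{h_n}-j_{h_n}e^{A_0 s}Q_m^0\bigr)F_0\,ds,
\]
and split each integral at $s=-T$. Using $\|e^{A_h s}Q_m^h\|\le e^{\lambda_{m+1}^h s}$ for $s\le 0$, the bounds $\|F_h\|\le M_F$ and $\|j_{h_n}\|\le 2$, together with Proposition \ref{continuity of spectra}, both tails over $(-\infty,-T]$ are dominated uniformly in $p$ and (for $n$ large) in $n$ by $C M_F e^{-\lambda_{m+1}^0 T/2}$, which can be made smaller than any prescribed $\varepsilon/4$ by choosing $T$ large enough.

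On the window $[-T,0]$ I would telescope the first integrand through four substitutions that peel off, in order: $\|p_n(s)-j_{h_n}p_0(s)\|_{L^2(\Omega_{h_n})}$ (to be controlled by Lemma \ref{compare p(t)}); $|\psi_{h_n}p_n(s)-\psi_0 p_0(s)|_{\mathbb R^m}$ (Lemma \ref{compare psi} together with $\mathrm{Lip}\,\Phi_{h_n}\le 1$); the self-referential quantity $\beta_{h_n}(\psi_0 p_0(s))$ arising from swapping $\Phi_{h_n}(\psi_0 p_0(s))\leftrightarrow j_{h_n}\Phi_0(\psi_0 p_0(s))$; and a residual dominated by $\rho(h_n)$. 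The second integrand on $[-T,0]$ is controlled by Lemma \ref{orthogonal_est} applied to the bounded set $\{F_0(u+\Phi_0(\psi_0 u)):u\in \hat B_{-T}\}\subset L^2(\Omega_0)$, where $\hat B_{-T}:=S_0([-T,0])(B)\subset\mathcal M_0$ is bounded because $\mathcal M_0$ is a Lipschitz graph over $\mathbb R^m$ and the flow on it is an ODE. After integrating against $e^{\lambda_{m+1}^{h_n}s}$, every contribution except the self-referential one becomes an error $\eta_n(T)$ with $\eta_n(T)\to 0$ as $n\to\infty$ for each fixed $T$; the self-referential contribution produces the factor $k_n:=L_{h_n}/\lambda_{m+1}^{h_n}\to k:=L_0/\lambda_{m+1}^0<1$, strictly less than one by the standing hypothesis $\lambda_m^0>L_0$ in \eqref{condition}.

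The main obstacle lies in the self-referential term: since $\hat B_{-T}$ is strictly larger than $V$, a naive use of the a priori bound $\beta_{h_n}\le M$ from \eqref{beta} only produces $\beta_{h_n}(V)\le kM+\eta_n(T)+\varepsilon/2$, which is not small. I would resolve this by iterating the recursive inequality $N$ times: for each fixed $N$ the backward sets $S_0([-jT,0])(B)$, $j\le N$, remain bounded, so the per-step errors $\eta_n^{(j)}(T)$ each tend to $0$ as $n\to\infty$. Using \eqref{beta} only at the last step, the iteration yields
\[
\beta_{h_n}(V)\le k_n^N M+\frac{\max_{0\le j<N}\eta_n^{(j)}(T)}{1-k_n}+\frac{\varepsilon}{2(1-k_n)}.
\]
Given $\varepsilon>0$, I would first pick $T$ large to kill the tail, then $N$ with $k^N M<\varepsilon/4$, and finally $n$ large enough that the remaining errors drop below $\varepsilon/4$. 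This forces $\beta_{h_n}(V)<\varepsilon$ and completes the proof.
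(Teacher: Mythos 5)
Your overall strategy mirrors the paper's: split the Lyapunov--Perron integrals at $s=-T$, kill the tails uniformly using $M_F$ and the spectral gap, control the window via Lemmas \ref{compare psi}--\ref{orthogonal_est}, and close by iterating a recursive inequality with the a priori bound \eqref{beta} used only once at the last step. But there is a genuine gap in your identification of the contraction coefficient. You isolate a single self-referential term (from swapping $\Phi_{h_n}(\psi_0 p_0(s))\leftrightarrow j_{h_n}\Phi_0(\psi_0 p_0(s))$) and assert it yields, after integrating against $e^{\lambda_{m+1}^{h_n}s}$, the factor $k_n=L_{h_n}/\lambda_{m+1}^{h_n}$, while ``every [other] contribution becomes an error $\eta_n(T)\to 0$.'' That is not true for the piece you propose to ``control by Lemma~\ref{compare p(t)}'': the right-hand side of that lemma contains the term $\frac{1}{\lambda_m^0+1}L_{h_n}\beta_{h_n}(\psi_0 B_{-T})$, which does \emph{not} vanish as $n\to\infty$. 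After being multiplied by $2L_{h_n}e^{\lambda_{m+1}^{h_n}s}$ and integrated over $[-T,0]$, it contributes an additional
\[
\frac{2L_{h_n}^2}{(\lambda_m^0+1)\,(2L_{h_n}+\lambda_{m+1}^{h_n}-\lambda_m^0-1)}
\]
to the coefficient of $\beta_{h_n}(\psi_0 B_{-T})$, as spelled out in \eqref{I_est}. The true contraction factor is the \emph{sum} of this and your $L_{h_n}/\lambda_{m+1}^{h_n}$, which the paper bounds by $\eta=\frac{2}{2\sqrt{2}+1-\delta}+\frac{1}{2\sqrt{2}+1-\delta}<1$. Note this is precisely where the second half of hypothesis \eqref{condition} is needed: $\lambda_m^0>L_0$ is what lets one dominate the factor $(\lambda_m^0+1)^{-1}L_{h_n}$ by $1$, while the spectral gap alone already gives $L_0/\lambda_{m+1}^0<1$. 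Your attribution of $k<1$ to $\lambda_m^0>L_0$ is thus misplaced, and a symptom of the missing term. The fix is to carry the extra self-referential contribution through your geometric iteration and verify the combined coefficient stays below $1$; with that amendment your argument becomes the paper's.
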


\begin{proof}
Let $\varepsilon > 0$ be arbitrary, and choose $\delta >0$ such that
$$
\eta :=  \dfrac{2}{2\sqrt{2} +1 - \delta} + \dfrac{1}{2 \sqrt{2} + 1 -\delta} <1,
$$
and denote by $\eta_0 = \sum_{i=1}^\infty \eta^i$.
Take a constant $T>0$ such that
$$
\int_{-\infty}^{-T} \| e^{A_{h_n} s} Q_m^{h_n} F_{h_n} - j_{h_n} e^{A_0 s} Q_m^0 F_0\|_{L^2(\Omega_{h_n})} ds  \le \dfrac{\varepsilon}{8 \eta_0}.
$$
For any  $k \geq 1$ and a bounded set $B \subset \mathcal{M}_0$, we denote by 
$$
\hat B_{-kT} = S_0 (B,[-kT,0]) ~\mbox{and}~ B_{-kT} = P_m^0 S_0 (B,[-kT,0]).
$$
Since $F_0 (\hat B_{-T})$ is bounded in $L^2(\Omega_0)$, there exists a constant $C>0$ such that for any $u = \sum_{i=1}^m a_i \phi_i^0$ in $B_{-T}$ and $v = \sum_{i=1}^m b_i \phi_i^0$ in $P_m^0 F_0 (\hat B_{-T})$, we have $\sum_{i=1}^m |a_i| < C$ and $\sum_{i=1}^m |b_i| <C$.

Step 1. There is $N_1 > 0$ such that for any $n \geq N_1$,
$$
\beta_{h_n}(\psi_0 P_m^0 B) \leq \eta \ \beta_{h_n} (\psi_0 B_{-T}) +  \dfrac{\varepsilon}{4 \eta_0}.
$$

For any $p \in B$, we have
\begin{align*}
\| \Phi_{h_n} (\psi_0 p) &- j_{h_n} \Phi_0 (\psi_0 p) \|_{L^2(\Omega_{h_n})}  \\
&\leq \int_{-\infty}^0 \| e^{A_{h_n} s} Q_m^{h_n} F_{h_n} - j_{h_n} e^{A_0 s} Q_m^0 F_0\|_{L^2(\Omega_{h_n})} ds   \\
&=  \int_{-\infty}^{-T} \| e^{A_{h_n} s} Q_m^{h_n} F_{h_n} - j_{h_n} e^{A_0 s} Q_m^0 F_0\|_{L^2(\Omega_{h_n})} ds   \\
& \quad + \int_{-T}^{0} \| e^{A_{h_n} s} Q_m^{h_n} F_{h_n} - j_{h_n} e^{A_0 s} Q_m^0 F_0 \|_{L^2(\Omega_{h_n})} ds \\
&\leq \dfrac{\varepsilon}{8 \eta_0}
+ \int_{-T}^0 \| e^{A_{h_n} s} Q_m^{h_n} (F_{h_n} -j_{h_n} F_0) \|_{L^2(\Omega_{h_n})}ds \\
& \quad + \int_{-T}^0 \| (e^{A_{h_n} s} Q_m^{h_n} j_{h_n} - j_{h_n} e^{A_0 s} Q_m^0)F_0 \|_{L^2(\Omega_{h_n})} ds : = \dfrac{\varepsilon}{8 \eta_0} + I + II.
\end{align*}
By Lemma \ref{compare p(t)}, we obtain
\begin{align*}
\|e^{A_{h_n} s}& Q_m^{h_n} (F_{h_n} - j_{h_n} F_0)\|_{L^2(\Omega_{h_n})}  
\leq e^{\lambda_{m+1}^{h_n} s} \|(F_{h_n} - j_{h_n} F_0)\|_{L^2(\Omega_{h_n})} \\
&\leq e^{\lambda_{m+1}^{h_n} s} \left(2L_{h_n} \|p_n (s) - j_{h_n} p_0(s)\|_{L^2(\Omega_{h_n})} + L_{h_n} \beta_{h_n} (\psi_0 B_{-T}) + CL_{h_n}\alpha(h_n)+ \rho(h_n) \right)  \\
&\leq \Bigg( 2e^{(\lambda_m^0 +1) t} L_{h_n} C \gamma_{h_n}(T) 
+2 L_{h_n} C \alpha(h_n)
+ \dfrac{2}{\lambda_m^0 +1} L_{h_n}^2 \beta_{h_n} (\psi_0 B_{-T}) 
+ \dfrac{2}{\lambda_m^0 +1} L_{h_n} \rho({h_n})\\
& \quad + 4T e^{(\lambda_m^0 +1) t} L_{h_n} C\gamma_{h_n} (T)
+ \dfrac{ 2C(2 +L_{h_n})}{\lambda_m^0 +1} L_{h_n} \alpha(h_n) \Bigg) e^{(2L_{h_n} + \lambda_{m+1}^{h_n} - \lambda_m^0 -1)s} \\
& \quad + e^{\lambda_{m+1}^{h_n} s}L_{h_n} \beta_{h_n} (\psi_0 B_{-T}) 
+ e^{\lambda_{m+1}^{h_n} s}CL_{h_n}\alpha(h_n)
+ e^{\lambda_{m+1}^{h_n} s} \rho(h_n).
\end{align*}
Hence we get
\begin{align}
I =&\int_{-T}^0 \|e^{A_{h_n} s} Q_m^{h_n} (F_{h_n} - j_{h_n} F_0)\|_{L^2(\Omega_{h_n})}
\leq \dfrac{2L_{h_n} C \gamma_{h_n}(T)}{2L_{h_n} + \lambda_{m+1}^{h_n}} \notag\\
& \quad + \dfrac{2L_{h_n} C \alpha(h_n)}{2L_{h_n} + \lambda_{m+1}^{h_n} - \lambda_m^0 -1}
+ \dfrac{2L_{h_n}^2 \beta_{h_n} (\psi_0 B_{-T})}{(\lambda_m^0 +1) (2L_{h_n} + \lambda_{m+1}^{h_n} - \lambda_m^0 -1)}  \notag\\
& \quad + \dfrac{2L_{h_n} \rho(h_n)}{2L_{h_n} + \lambda_{m+1}^{h_n} - \lambda_m^0 -1}
+ \dfrac{4L_{h_n} T C\gamma_{h_n}(T)}{2L_{h_n} + \lambda_{m+1}^{h_n}}\notag \\ 
& \quad + \dfrac{2 C (2+L_{h_n})L_{h_n}\alpha(h_n)}{(\lambda_m^0 +1) (2L_{h_n} + \lambda_{m+1}^{h_n} - \lambda_m^0 -1)}
+ \dfrac{L_{h_n} \beta_{h_n} (\psi_0 B_{-T})}{\lambda_{m+1}^{h_n}}
+\dfrac{CL_{h_n}\alpha(h_n)}{\lambda_{m+1}^{h_n}}
+ \dfrac{\rho(h_n)}{\lambda_{m+1}^{h_n}} \notag\\
&\leq \left(\dfrac{2L_{h_n}^2 }{(\lambda_m^0 +1) (2L_{h_n} + \lambda_{m+1}^{h_n} - \lambda_m^0 -1)} 
+ \dfrac{L_{h_n}}{\lambda_{m+1}^{h_n}}  \right) \beta_{h_n} (\psi_0 B_{-T}) \notag \\
& \quad + \dfrac{2 C L_{h_n} \gamma_{h_n}(T)}{2L_{h_n} + \lambda_{m+1}^{h_n}}
+ \dfrac{2L_{h_n} C \alpha(h_n)}{2L_{h_n} + \lambda_{m+1}^{h_n} - \lambda_m^0-1} \notag \\
& \quad + \dfrac{2L_{h_n} \rho(h_n)}{2L_{h_n} + \lambda_{m+1}^{h_n} - \lambda_m^0 -1}
+ \dfrac{4L_{h_n} C T \gamma_{h_n}(T)}{2L_{h_n} + \lambda_{m+1}^{h_n}}\notag \\
& \quad + \dfrac{2 C(2+L_{h_n})L_{h_n} \alpha(h_n)}{(\lambda_m^0 +1) (2L_{h_n} + \lambda_{m+1}^{h_n} - \lambda_m^0 -1)} +\dfrac{CL_{h_n}\alpha(h_n)}{\lambda_{m+1}^{h_n}}
+ \dfrac{\rho(h_n)}{\lambda_{m+1}^{h_n}} \notag \\
&:=  \left(\dfrac{2L_{h_n}^2 }{(\lambda_m^0 +1) (2L_{h_n} + \lambda_{m+1}^{h_n} - \lambda_m^0 -1)} + \dfrac{L_{h_n}}{\lambda_{m+1}^{h_n}}  \right) \beta_{h_n} (\psi_0 B_{-T})  + \tilde I. \label{I_est}
\end{align}

\noindent
By Proposition \ref{continuity of spectra}, we can take $N_1 >0$ such that for any $n \geq N_1$,
$$
\lambda_m^{h_n} > \lambda_m^0 - \delta \quad \hbox{ and } \quad \lambda_m^{h_n} > L_{h_n} - \delta.
$$
Note that $\lambda_m^0 +1> L_{h_n}$ and
\begin{align*}
2L_{h_n} + \lambda_{m+1}^{h_n} - \lambda_m^0 -1
= 2L_{h_n} + (\lambda_{m+1}^{h_n} - \lambda_m^{h_n}) + (\lambda_m^{h_n} - \lambda_m^0) -1 > 2 L_{h_n} + 2 \sqrt{2} L_{h_n} -1 - \delta.
\end{align*}
Thus we have
\begin{align*}
\dfrac{2L_{h_n}^2}{(\lambda_m^0 +1) (2L_{h_n} + \lambda_{m+1}^{h_n} - \lambda_m^0 -1)} \leq \dfrac{2}{2\sqrt{2} + 1 - \delta}  \quad \hbox{ and } \quad \dfrac{L_{h_n}}{\lambda_{m+1}^{h_n}} <\dfrac{1}{2 \sqrt{2} + 1 -\delta}.
\end{align*}

Since $\gamma_{h_n}(T)$, $\alpha(h_n)$ and $\rho(h_n)$ converge to $0$ as $n \rightarrow \infty$, we can choose $N_1>0$ such that $\tilde I < \dfrac{\varepsilon}{16 \eta_0}$ for any $n \geq N_1$. Consequently we obtain
\begin{equation} \label{I}
I < \eta~ \beta_{h_n} (\psi_0 B_{-T}) + \dfrac{\varepsilon}{16 \eta_0},~ \forall n \geq N_1.
\end{equation}

On the other hand, by Lemma \ref{orthogonal_est}, we can take $N>0$ such that for any $u \in L^2(\Omega_0)$ with $\|u\|_{L^2(\Omega_0)} \le C$ and $n \ge N$,
\begin{equation}
II = \int_{-T}^0 \| (e^{A_{h_n} s} Q_m^{h_n} j_{h_n} - j_{h_n} e^{A_0 s} Q_m^0) F_0 \|_{L^2(\Omega_{h_n})} ds < \dfrac{\varepsilon}{16 \eta_0}. \label{II}
\end{equation}
By \eqref{I} and \eqref{II}, we have
\begin{align*}
\| \Phi_{h_n} ({p}) - j_{h_n} \Phi_0 ({p}) \|_{L^2(\Omega_{h_n})} < \eta ~ \beta_{h_n} (\psi_0 B_{-T}) + \dfrac{\varepsilon}{4 \eta_0}.
\end{align*}
Since ${p}$ is arbitrary in $B$, we get 
$$
\beta_{h_n} (\psi_0 P_m^0 B) < \eta~\beta_{h_n}(\psi_0 B_{-T}) + \dfrac{\varepsilon}{4 \eta_0}, ~\forall n \geq N_1.
$$
This completes the proof of  Step 1.

Step 2. There is $N >0$ such that $\beta_{h_n} (\psi_0 B) < \varepsilon$ for any $n \geq N$.

By the same procedure as in Step 1, we derive that for each $k \in \mathbb{N}$, there is $N_k > N_{k-1}$ such that for any $n \geq N_k$,
$$
\beta_{h_n}(\psi_0 B_{-(k-1)T}) < \eta~\beta_{h_n}(\psi_0 B_{-kT}) + \dfrac{\varepsilon}{4 \eta_0}.
$$
Hence we have
\begin{align*}
\beta_{h_n}(\psi_0 P_m^0 B) 
< \eta^k \beta_{h_n} (\psi_0 B_{-kT}) + \dfrac{\varepsilon}{4 \eta_0} \sum_{i=0}^{k-1} \eta^i  < \eta^k M + \dfrac{\varepsilon}{4}.
\end{align*}
Take $k >0$ such that $\eta^k M < \varepsilon /2$ and $N > N_k$. Then for any $n >N$, we have $\beta_{h_n} (\psi_0 P_m^0 B) < \varepsilon$. This completes the proof.
\end{proof}

For each $n \in \mathbb{N}$, we define $\hat{j}_{h_n} : \mathcal{M}_0 \rightarrow \mathcal{M}_{h_n}$ by
$$
\hat{j}_{h_n} (p_0 + \Phi_0 (\psi_0 p_0)) = \psi_{h_n}^{-1} \psi_0 p_0 + \Phi_{h_n}(\psi_0 p_0),~\forall p_0 \in P_m^0 \mathcal M_0.
$$
It  is clear that $\hat j_{h_n}$ is a bijection with the inverse $\hat i_{h_n}$ given by
$$
\hat{i}_{h_n} (p_n + \Phi_{h_n} (\psi_{h_n} p_{n})) = \psi_{0}^{-1} \psi_{h_n} p_n + \Phi_{0} (\psi_{h_n} p_n),~\forall p_n \in P_m^{h_n} \mathcal M_{h_n}.
$$

\begin{lemma} \label{compare j}
Let $B$ be a bounded subset of $\mathcal{M}_0$. Then
$$
\| j_{h_n}(u) - \hat{j}_{h_n}(u)\|_{L^2(\Omega_{h_n})} \to 0 ~\hbox{ as } n \rightarrow \infty
$$ 
uniformly for $u \in B$.
\end{lemma}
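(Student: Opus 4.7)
The plan is to decompose the difference $j_{h_n}(u) - \hat{j}_{h_n}(u)$ into a ``linear'' part coming from comparing $j_{h_n}$ and $\psi_{h_n}^{-1}\psi_0$ on the finite-dimensional component, and a ``nonlinear'' part that is precisely the quantity $\beta_{h_n}$ controlled by Lemma \ref{distance of IM}.

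Write any $u \in B$ uniquely as $u = p_0 + \Phi_0(\psi_0 p_0)$ with $p_0 = \sum_{i=1}^m a_i \phi_i^0 \in P_m^0 L^2(\Omega_0)$. Then by the definitions of $j_{h_n}$ and $\hat{j}_{h_n}$,
$$
j_{h_n}(u) - \hat{j}_{h_n}(u) = \bigl(j_{h_n} p_0 - \psi_{h_n}^{-1}\psi_0 p_0\bigr) + \bigl(j_{h_n}\Phi_0(\psi_0 p_0) - \Phi_{h_n}(\psi_0 p_0)\bigr),
$$
so that
$$
\|j_{h_n}(u) - \hat{j}_{h_n}(u)\|_{L^2(\Omega_{h_n})} \leq \Bigl\|\sum_{i=1}^m a_i\bigl(j_{h_n}\phi_i^0 - \phi_i^{h_n}\bigr)\Bigr\|_{L^2(\Omega_{h_n})} + \beta_{h_n}(\psi_0 p_0),
$$
since $\psi_{h_n}^{-1}\psi_0 p_0 = \sum_{i=1}^m a_i \phi_i^{h_n}$.

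For the first term I would invoke Lemma \ref{alpha} to bound it by $\alpha(h_n)\sum_{i=1}^m |a_i|$. The key observation to get uniformity in $u \in B$ is that since $\mathcal{M}_0$ is the graph of $\Phi_0$, the projection map $u \mapsto p_0 = P_m^0 u$ satisfies $\|p_0\|_{L^2(\Omega_0)} \leq \|u\|_{L^2(\Omega_0)}$; since $B$ is bounded, $\{p_0 : u \in B\}$ is a bounded subset of the finite-dimensional space $P_m^0 L^2(\Omega_0)$, hence the coordinates $|a_i|$ are uniformly bounded by some constant $C_B$. Combined with $\alpha(h_n) \to 0$ (which follows from Lemma \ref{alpha}), the first term goes to $0$ uniformly over $u \in B$.

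For the second term, $P_m^0 B$ is a bounded subset of $P_m^0 L^2(\Omega_0)$ by the above argument, so Lemma \ref{distance of IM} gives $\beta_{h_n}(\psi_0 P_m^0 B) \to 0$, which means $\beta_{h_n}(\psi_0 p_0) \to 0$ uniformly over $u \in B$. Adding these two bounds yields the desired uniform convergence. The proof is essentially just combining Lemma \ref{alpha}, Lemma \ref{distance of IM}, and the trivial observation that bounded subsets of $\mathcal{M}_0$ project onto bounded subsets of $P_m^0 L^2(\Omega_0)$; no further obstacle arises.
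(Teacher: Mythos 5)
Your proposal is correct and follows essentially the same decomposition as the paper's proof: split $j_{h_n}(u)-\hat{j}_{h_n}(u)$ into the finite-dimensional part $j_{h_n}p_0-\psi_{h_n}^{-1}\psi_0 p_0=\sum_i a_i(j_{h_n}\phi_i^0-\phi_i^{h_n})$, bounded by $\alpha(h_n)\sum|a_i|$ via Lemma~\ref{alpha}, and the graph part $j_{h_n}\Phi_0(\psi_0 p_0)-\Phi_{h_n}(\psi_0 p_0)$, bounded by $\beta_{h_n}(\psi_0 P_m^0 B)$ via Lemma~\ref{distance of IM}. Your direct expansion of $\psi_{h_n}^{-1}\psi_0 p_0$ is slightly cleaner than the paper's detour through $\|\psi_{h_n}^{-1}\|\,|\psi_{h_n}j_{h_n}p_0-\psi_0 p_0|_{\mathbb{R}^m}$, and your explicit justification that the coordinates $|a_i|$ are uniformly bounded over $B$ (via $\|P_m^0 u\|\le\|u\|$ and finite-dimensionality) is a point the paper leaves implicit, but the argument is the same.
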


\begin{proof}
Let $B$ be a bounded subset of $\mathcal{M}_0$.
For any $u \in B$, there exists $a_i \in \mathbb R$ $(1 \le i \le m)$ such that 
$$
u = p_0 + \Phi_0 (\psi_0 p_0) \quad \mbox{with}\quad p_0 = \sum_{i=1}^m a_i\phi_i^0 \in P_m^0 \mathcal M_0.
$$
By the fact that  $\|\psi_{h_n}^{-1}\|_{L^{\infty}(\mathbb R^m, P_m^0L^2(\Omega_{h_n}))}=1$ and Lemma \ref{compare psi}, we have
\begin{align*}
\|j_{h_n} (u) - \hat{j}_{h_n} (u)\|_{L^2(\Omega_{h_n})}
&= \| j_{h_n} (p_0 + \Phi_0 (\psi_0 p_0)) - \hat{j}_{h_n} (p_0 + \Phi_0 (\psi_0 p_0)) \|_{L^2(\Omega_{h_n})} \\
&\leq \|j_{h_n} p_0 - \psi_{h_n}^{-1} \psi_0 p_0 \|_{L^2(\Omega_{h_n})}
+ \|j_{h_n} \Phi_0 (\psi_0 p_0) - \Phi_{h_n} (\psi_0 p_0) \|_{L^2(\Omega_{h_n})} \\
&\leq \|\psi_{h_n}^{-1} \| | \psi_{h_n} j_{h_n} p_0 - \psi_0 p_0 |_{\mathbb{R}^m}
+ \beta_{h_n}(\psi_0 P_m^0 B)\\
&\leq \alpha(h_n)  \sum_{i=1}^m |a_i| + \beta_{h_n}(\psi_0 P_m^0B).
\end{align*}
By Lemmas \ref{alpha} and \ref{distance of IM}, we see that $\alpha(h_n)$ and $\beta(h_n)$ converge to $0$ as $n \rightarrow \infty$. Hence we derive that  $\|j_{h_n} (u) - \hat{j}_{h_n} (u)\|_{L^2(\Omega_{h_n})} \rightarrow 0$ as $n \rightarrow \infty$.
\end{proof}

\begin{proof}[\bf End of Proof of Theorem \ref{GH convergence}]
We first show that there is $N >0$ such that
$\hat{j}_{h_n}$ is an $\varepsilon$-isometry for all $n \geq N$. Since $B_0$ is bounded in $L^2(\Omega_0)$, we take $C>0$ such that $\|u_0\|_{L^2(\Omega_0)}< C$ for all $u_0 \in B_0$.

For the bounded set $B_0 \subset \mathcal{M}_0$, by Lemma \ref{compare j}, we can take $N >0$ such that if $n \geq N$ then
\begin{align*}
\left| \|j_{h_n} \| -1 \right| < \dfrac{\varepsilon}{6C}, ~\mbox{and}~ \|j_{h_n}(u) - \hat{j}_{h_n} (u) \|_{L^2(\Omega_{h_n})} < \dfrac{\varepsilon}{3}, ~ \forall u \in B_0.
\end{align*} 
For any $u, \tilde u \in B_0$, we let
$$
u = p_0 + \Phi_0 (\psi_0 p_0) ~\mbox{and}~\tilde{u} = \tilde{p}_0 + \Phi_0 (\psi_0 \tilde{p}_0) ~\mbox{for~some}~ p_0, \tilde p_0 \in P_m^0\mathcal M_0.
$$
For any $n \ge N$, we have
\begin{align*}
\| \hat{j}_{h_n} u_0 &- \hat{j}_{h_n} \tilde{u}_0 \|_{L^2(\Omega_{h_n})} - \| u_0 - \tilde{u}_0 \|_{L^2(\Omega_0)}\\
& \leq  \|\hat{j}_{h_n} (p_0 + \Phi_0 (\psi_0 p_0)) - j_{h_n} (p_0 + \Phi_0 (\psi_0 p_0)) \|_{L^2(\Omega_{h_n})}  \\
& \ \ \ + \|j_{h_n} (p_0 + \Phi_0 (\psi_0 p_0)) - j_{h_n} (\tilde{p}_0 + \Phi_0 (\psi_0 \tilde{p}_0)) \|_{L^2(\Omega_{h_n})}\\
& \ \ \ + \|j_{h_n} (\tilde{p}_0 + \Phi_0 (\psi_0 \tilde{p}_0)) - \hat{j}_{h_n} (\tilde{p}_0 + \Phi_0 (\psi_0 \tilde{p}_0)) \|_{L^2(\Omega_{h_n})}
 -\| u_0 - \tilde{u}_0 \|_{L^2(\Omega_0)}\\
& \leq \dfrac{2 \varepsilon}{3} + (\|j_{h_n}\| -1) \|u_0 - \tilde{u}_0\|_{L^2(\Omega_0)} < \varepsilon.
\end{align*}
Similarly we can show that $ \| u_0 - \tilde{u}_0 \|_{L^2(\Omega_0)}- \| \hat{j}_{h_n} u_0 - \hat{j}_{h_n} \tilde{u}_0 \|_{L^2(\Omega_{h_n})} <\varepsilon$.
This shows that $\hat j_{h_n}$ is an $\varepsilon$-isometry on $B_0$.

On the other hand, for any $u, \tilde{u} \in B_{h_n}$, let us take $v, \tilde{v} \in B_0$ such that $u= \hat{j}_{h_n} (v)$ and $\tilde{u} =\hat{j}_{h_n} (\tilde{v})$. Then we have
\begin{align*}
\big| \| \hat{i}_{h_n} (u) - \hat{i}_{h_n} (\tilde{u})\|_{L^2(\Omega_0)} &- \|u - \tilde{u} \|_{L^2(\Omega_{h_n})} \big| \\
&= \big| \| \hat{i}_{h_n} (\hat{j}_{h_n} (v)) - \hat{i}_{h_n} (\hat{j}_{h_n} (\tilde{v}))\|_{L^2(\Omega_0)}  - \| \hat{j}_{h_n} (v) - \hat{j}_{h_n} (\tilde{v}) \|_{L^2(\Omega_{h_n})}   \big|  \\
&= \big| \| v - \tilde{v} \|_{L^2(\Omega_0)}  - \| \hat{j}_{h_n} (v) - \hat{j}_{h_n} (\tilde{v}) \|_{L^2(\Omega_{h_n})}   \big| < \varepsilon.
\end{align*}

\noindent
This shows that $\hat{i}_{h_n}$ is an $\varepsilon$-isometry on $B_{h_n}$.

Moreover, since $\hat{j}_{h_n} (B_0)=B_{h_n}$ and $\hat{i}_{h_n}(B_{h_n})=B_0$ for all $n \in \mathbb{N}$, we get $d_{GH}(B_{h_n}, B_0) < \varepsilon$ for all $n \geq N$.
The contradiction completes the proof.
\end{proof}

\section{Proof of Theorem  \ref{stability of flows}}
Suppose not. Then there are $\varepsilon >0$, $T>0$, and a bounded set $B_0 \subset \mathcal{M}_0$
such that for any $n \in \mathbb{N}$, 
there is $h_n \in {\rm Diff}(\Omega_0)$ with $d_{C^1}(h_n,id)< 1/n$
such that for any bounded set $B_{h_n} \subset \mathcal{M}_{h_n}$,
$D_{GH}^T(S_{h_n}|_{B_{h_n}},S_0|_{B_0}) \geq \varepsilon$.

Let $\{\lambda_1^{h_n}, \ldots, \lambda_m^{h_n} \}$ and $\{\phi_1^{h_n}, \ldots, \phi_m^{h_n}\}$ be the first $m$ eigenvalues and corresponding eigenfunctions of $A_{h_n}$, respectively.
By Proposition \ref{continuity of spectra}, there are eigenfunctions $\{\phi_1^0, \ldots, \phi_m^0\}$ with respect to the first $m$ eigenvalues $\{\lambda_1^0, \ldots, \lambda_m^0\}$ of $A_0$, and a subsequence of $\{h_n\}_{n \in \mathbb N}$, still denoted by $\{h_n\}_{n \in \mathbb N}$, such that 
$$
\phi_i^{h_n} \rightarrow \phi_i^0 ~\mbox{in}~L^2(\mathbb R^N)~\mbox{as}~ n \rightarrow \infty, ~\forall 1 \leq i \leq m.
$$
For each $n \in \mathbb{N}$, we denote by $B_{h_n}$ the collection of $u_{h_n} \in \mathcal{M}_{h_n}$ such that $\psi_{h_n} P_m^{h_n} u_{h_n} = \psi_0 P_m^0 u_0$ for some $u_0 \in B_0$.

Now we show that $D_{GH}^T(S_{h_n}|_{B_{h_n}}, S_0|_{B_0})< \varepsilon$ for sufficiently large $n$.
Let $p_0(t)$ and $p_n(t)$ be the solutions of  \eqref{eq:p_0} and \eqref{eq:p_h}, respectively, such that $p_0(0) \in P_m^0B_0$ and $\psi_0p_0(0) = \psi_{h_n}p_n(0)$. By Lemma \ref{compare p(t)}, there is $N>0$ such that 
$$
\|p_n(t)\|_{L^2(\Omega_{h_n})} \leq \|j_{h_n} p_0(t)\|_{L^2(\Omega_{h_n})} +M, ~\forall t \in [-T,T],~\forall n \geq N,
$$
where $M>0$ is given in \eqref{beta}.
It follows that
\begin{align*}
\|\hat{i}_{h_n} (p_n(t) + \Phi_{h_n} (\psi_{h_n} p_n(t)) \|_{L^2(\Omega_0)}
&= \|\psi_0^{-1} \psi_{h_n} p_n(t) + \Phi_0(\psi_{h_n} p_n(t)) \|_{L^2(\Omega_0)} \\
&\leq \|p_n(t)\|_{L^2(\Omega_{h_n})} + M\\
&\leq \|j_{h_n} p_0(t)\|_{L^2(\Omega_{h_n})} + 2M \\
&\leq 2C + 2M,
\end{align*}
where $C = \sup \{\|S_0(u_0,t)\|_{L^2(\Omega_0)}: u_0 \in B_0, t \in [-T,T]\}$.
Then there is a bounded set $D_0 \subset \mathcal{M}_0$ such that
$$
S_0 (B_0,[-T,T]) \subset D_0 ~\hbox{and}~ \hat i_{h_n}(S_{h_n} (B_{h_n},[-T,T])) \subset D_0, \forall n \geq N.
$$
For each $n \in \mathbb{N}$, we denote by $D_{h_n}$ the collection of $u_{h_n} \in \mathcal{M}_{h_n}$ such that $\psi_{h_n} P_m^{h_n} u_{h_n} = \psi_0 P_m^0 u_0$ for some $u_0 \in D_0$.

As in the proof of Theorem \ref{GH convergence}, we can choose $N_1 \in \mathbb N$ such that the map $\hat{j}_{h_n} : \mathcal M_0 \to \mathcal M_{h_n}$ is an $\varepsilon/2$-isometry on $D_0$ and $\hat{i}_{h_n}: \mathcal{M}_{h_n} \rightarrow \mathcal{M}_0$ is an $\varepsilon/2$-isometry on $D_{h_n}$ for any $n \ge N_1$.
For given $T>0$ and $u_0 \in B_0$, let $u_0(t) = S(u_0,t)$ and $u_n(t) = S_{h_n}(\hat j_{h_n}(u_0),t)$ for $t \in [-T,T]$, and denote $\tilde{B}_0 = \{P_m^0 u_0(t) : u_0 \in B_0, t \in [-T,T]\}$.
Then we have
\begin{align*}
&\left\|\hat{j}_{h_n} (S_0 (u_0 (0),t)) - S_{h_n} (\hat{j}_{h_n} (u_0 (0)), t) \right\|_{L^2(\Omega_{h_n})} \\
&\leq \left\|\hat{j}_{h_n} (u_0 (t)) - j_{h_n} (u_0(t)) \right\|_{L^2(\Omega_{h_n})}
+ \left\|j_{h_n} (u_0(t)) - u_{n}(t) \right\|_{L^2(\Omega_{h_n})} \\
&\leq \left\|\hat{j}_{h_n} (u_0 (t)) - j_{h_n} (u_0(t)) \right\|_{L^2(\Omega_{h_n})}
+ \left\|j_{h_n} (p_0(t)) - p_{n}(t) \right\|_{L^2(\Omega_{h_n})}  \\
& \quad + \left\|j_{h_n} \Phi_0 (\psi_0 p_0(t)) - \Phi_{h_n} (\psi_{h_n} p_{n}(t)) \right\|_{L^2(\Omega_{h_n})}\\
&:= I_n + II_n+ III_n.
\end{align*}
By Lemma \ref{compare j}, we choose $N_2 > N_1$ such that 
$I_n < \varepsilon/6$ for any $n \geq N_2$.
By Lemma \ref{compare p(t)}, we take $N_3 > N_2$ such that
$$
II_n = \|j_{h_n} ( p_0 (t)) - p_{n} (t) \|_{L^2(\Omega_{h_n})} < \varepsilon/6, ~\forall n \ge N_3.
$$
Moreover, we have
\begin{align*}
III_n&= \|j_{h_n} \Phi_0 (\psi_0 p_0(t)) - \Phi_{h_n} (\psi_{h_n} p_{n}(t))\|_{L^2(\Omega_{h_n})}  \\
&\leq \|j_{h_n} \Phi_0 (\psi_0 p_0(t)) - \Phi_{h_n} (\psi_0 p_0(t)) \|_{L^2(\Omega_{h_n})}
+ \|\Phi_{h_n} (\psi_0 p_0(t)) - \Phi_{h_n} (\psi_{h_n} p_{n}(t)) \|_{L^2(\Omega_{h_n})} \\
&\leq \beta_{h_n} ( \psi_0 \tilde{B}_0) +  \left( \alpha(h_n) \sum_{i=1}^m |a_i (t)| + \|j_{h_n} p_0 (t) - p_{n} (t) \|_{L^2(\Omega_{h_n})} \right) \\
&= \beta_{h_n} (\psi_0 \tilde{B}_0) +  \alpha(h_n) \sum_{i=1}^m |a_i (t)| + II_n .
\end{align*}
Since $\alpha(h_n)$ and $\beta_{h_n} (\psi_0 \tilde{B}_0)$ converge to $0$ as $n \rightarrow \infty$, by Lemma \ref{compare p(t)}, we get $N_4 > N_3$ such that 
$$
III_n  < \varepsilon/6, ~\forall n \geq N_4.
$$
Consequently we derive that
$$
\left\|\hat{j}_{h_n} (S_0 (u_0 (0),t)) - S_{h_n} (\hat{j}_{h_n} (u_0 (0)), t) \right\|_{L^2(\Omega_{h_n})} < \dfrac{\varepsilon}{2}, ~\forall n \geq N_4.
$$

On the other hand, since $\hat{j}_{h_n}$ is an $\varepsilon/2$-isometry on $D_0$, we have
\begin{align*}
&\left\|\hat{i}_{h_n} (S_{h_n} (u_n(0) ,t)) - S_0(\hat{i}_{h_n}(u_n(0)),t) \right\|_{L^2(\Omega_0)} \\
&\leq \left\| S_{h_n} (\hat{j}_{h_n}(\hat{i}_{h_n} (u_n(0))),t) - \hat{j}_{h_n} (S_0 (\hat{i}_{h_n}(u_n(0))),t) \right\|_{L^2(\Omega_{h_n})} + \dfrac{\varepsilon}{2} < \varepsilon,~ \forall n \geq N_4.
\end{align*}
This shows that $D_{GH}^T(S_{h_n}|_{B_{h_n}}, S_0|_{B_0})< \varepsilon$ for all $n \geq N_4$.
The contradiction completes the proof.
\begin{flushright}
$\qed$
\end{flushright}

\vskip0.5cm
\noindent{\bf Acknowledgments.} The first author is supported by Basic Science Research Program through the National Research Foundation of Korea(NRF) funded by the Ministry of Education (NRF-2019R1A6A3A01091340).

\end{document}